\documentclass[11pt]{amsart}
\usepackage{url}
\usepackage{amsfonts, mathrsfs}
\usepackage{amssymb}
\usepackage{amsmath}
\usepackage{xr-hyper}
\externaldocument[typeC-]{withJay_typeCgal/Galois_TypeC_notes}
\usepackage{amsthm}
\usepackage[lofdepth,lotdepth]{subfig}
\usepackage{setspace,amssymb,verbatim, enumitem}
\usepackage[margin=.9in]{geometry}

\usepackage{lineno}

\usepackage{prettyref}
\newrefformat{cond}{condition (\ref{#1})}
\newrefformat{conj}{Conjecture \ref{#1}}
\usepackage{graphicx}
\usepackage{rotating}
\usepackage{psfrag}
\usepackage{afterpage}
\usepackage{multirow}
\usepackage[lofdepth,lotdepth]{subfig}
\usepackage{tikz}

\theoremstyle{plain}
\newtheorem{theorem}{Theorem}[section]
\newtheorem{proposition}[theorem]{Proposition}
\newtheorem*{theorem*}{Theorem}

\newtheorem*{corollary*}{Corollary}
\newtheorem{corollary}[theorem]{Corollary}

\newtheorem*{lemma*}{Lemma}
\newtheorem{lemma}[theorem]{Lemma}

\newtheorem{thml}{Theorem}

\setcounter{condl}{2}

\theoremstyle{remark}

\newtheorem{remark}[theorem]{Remark}

\newtheorem*{case*}{Case}

\theoremstyle{definition}

\newtheorem*{MNConj}{McKay--Navarro Conjecture}

\usepackage{prettyref}
\newrefformat{sec}{Section \ref{#1}}
\newrefformat{chap}{Chapter \ref{#1}}
\newrefformat{prop}{Proposition \ref{#1}}
\newrefformat{prob}{Problem \ref{#1}}
\newrefformat{rem}{Remark \ref{#1}}
\newrefformat{cor}{Corollary \ref{#1}}
\newrefformat{cond}{Condition \ref{#1}}
\newrefformat{def}{Definition \ref{#1}}
\newrefformat{ex}{Example \ref{#1}}
\newrefformat{lem}{Lemma \ref{#1}}
\newrefformat{eq}{Equation \eqref{#1}}
\newrefformat{con}{condition (\ref{#1})}
\newrefformat{conj}{Conjecture \ref{#1}}
\newrefformat{fig}{Figure \ref{#1}}
\newrefformat{tab}{Table \ref{#1}}
\newrefformat{app}{Appendix \ref{#1}}

\newcommand{\Q}{\mathbb{Q}}

\newcommand{\aut}{\mathrm{Aut}}
\newcommand{\out}{\mathrm{Out}}

\newcommand{\F}{\mathbb{F}}

\newcommand{\syl}{\mathrm{Syl}}
\newcommand{\Syl}{\mathrm{Syl}}

\newcommand{\Irr}{\mathrm{Irr}}

\newcommand{\Ind}{\mathrm{Ind}}
\newcommand{\irr}{\mathrm{Irr}}

\newcommand{\wh}[1]{\widehat{#1}}

\newcommand{\la}{\lambda}

\newcommand{\wt}[1]{\widetilde{#1}}
\newcommand{\bG}{\mathbf{G}}
\newcommand{\bP}{\mathbf{P}}
\newcommand{\bH}{\mathbf{H}}
\newcommand{\bM}{\mathbf{M}}
\newcommand{\bL}{\mathbf{L}}
\newcommand{\bT}{\mathbf{T}}
\newcommand{\bS}{\mathbf{S}}
\newcommand{\bg}[1]{\mathbf{#1}}
\newcommand{\norm}{\mathrm{N}}
\newcommand{\cen}{\mathrm{C}}
\newcommand{\zen}{\mathrm{Z}}
\newcommand{\HC}{\mathrm{R}}

\newcommand{\SL}{\operatorname{SL}}
\newcommand{\GL}{\operatorname{GL}}

\newcommand{\type}[1]{\mathsf{#1}}
\newcommand{\Sp}{\operatorname{Sp}}

\newcommand{\PSp}{\operatorname{PSp}}
\newcommand{\PSL}{\operatorname{PSL}}
\newcommand{\gal}{\mathcal{G}}
\newcommand{\galh}{\mathcal{H}}

\newcommand{\Res}{\mathrm{Res}}
\newcommand{\alt}{\mathfrak{A}}

\title[Inductive McKay--Navarro Conditions]{The Inductive McKay--Navarro Conditions for the Prime 2 and Some Groups of Lie Type}
\author{L. Ruhstorfer}
\address{{Fachbereich Mathematik}, {TU Kaiserslautern}, {67653 Kaiserslautern, Germany}}
\email{ruhstorf@mathematik.uni-kl.de}
\author{A. A. Schaeffer Fry}
\address{{Dept. of Mathematics and Statistics}, {Metropolitan State University of Denver}, {Denver, CO 80217}}
\email{aschaef6@msudenver.edu}

\date{}
\thanks{The authors thank the  Isaac Newton Institute
for Mathematical Sciences in Cambridge and the organizers of the Spring 2020 INI program Groups, Representations, and
Applications: New Perspectives, supported by EPSRC grant EP/R014604/1, where this work began.  The second author also gratefully acknowledges support from the National Science Foundation (Award Nos. DMS-1801156 and DMS-2100912).  
The authors also thank G. Malle for helpful comments on an early preprint, as well as the anonymous referee for their careful reading and comments that improved the exposition.}

\usepackage{hyperref}
\begin{document}
\maketitle

\begin{abstract}

For a prime $\ell$, the McKay conjecture suggests a bijection between the set of irreducible characters of a finite group with $\ell’$-degree and the corresponding set for the normalizer of a Sylow $\ell$-subgroup. Navarro's refinement suggests that the values of the characters on either side of this bijection should also be related, proposing that the bijection commutes with certain Galois automorphisms. Recently, Navarro--Späth--Vallejo have reduced the McKay--Navarro conjecture to certain “inductive” conditions on finite simple groups.  We prove that these inductive McKay--Navarro (also called the inductive Galois--McKay) conditions hold for the prime $\ell=2$ for several groups of Lie type, namely the untwisted groups without nontrivial graph automorphisms.

\vspace{0.25cm}

\noindent \textit{Mathematics Classification Number:} 20C15, 20C33

\noindent \textit{Keywords:} local-global conjectures, characters, McKay conjecture, Galois--McKay, McKay--Navarro conjecture, finite simple groups, Lie type, Galois automorphisms

\end{abstract}

\section{Introduction} 

Many of the current problems in the character theory of finite groups are motivated by the celebrated McKay conjecture.  This conjecture states that for a prime $\ell$ dividing the size of a finite group $G$, there should be a bijection between the set $\irr_{\ell'}(G)$ of irreducible characters of degree relatively prime to $\ell$ and the corresponding set $\irr_{\ell'}(\norm_G(P))$ for the normalizer of a Sylow $\ell$-subgroup $P$ of $G$.   Navarro later posited that this bijection should moreover be compatible with the action of certain Galois automorphisms. 

We write $\Q^{\mathrm{ab}}$ for the field generated by all roots of unity in $\mathbb{C}$, and let $\gal:=\mathrm{Gal}(\Q^{\mathrm{ab}}/\Q)$ be the corresponding Galois group. For a prime $\ell$, let $\galh_\ell$ be the subgroup of $\gal$ consisting of those $\tau\in\gal$ such that there is an integer $f$ such that $\tau(\xi)=\xi^{\ell^f}$ for every root of unity $\xi$ of order not
divisible by $\ell$.  The following is Navarro's refinement of the McKay conjecture, sometimes also referred to as the Galois--McKay conjecture.

\begin{MNConj}[\cite{navarro2004}]
Let $G$ be a finite group and let $\ell$ be a prime.  Let $P\in\syl_\ell(G)$.  Then there exists an $\galh_\ell$-equivariant bijection between the sets $\irr_{\ell'}(G)$ and $\irr_{\ell'}(\norm_G(P))$.
\end{MNConj}

 A huge step toward the proof of this conjecture was recently accomplished in \cite{NSV}, where Navarro--Sp{\"a}th--Vallejo proved a reduction theorem for the McKay--Navarro conjecture.  There, they show that the conjecture holds for all finite groups if every finite simple group satisfies certain ``inductive" conditions.  
 
 These inductive McKay--Navarro conditions build upon the inductive conditions for the usual McKay conjecture, which was reduced to simple groups in \cite{IsaacsMalleNavarroMcKayreduction}.  Much work has been done toward proving the original inductive McKay conditions, and they were even completed for the prime $\ell=2$ in \cite{MS16}.  The focus of the current paper is to work toward expanding that result to the case of the inductive McKay--Navarro conditions.

In \cite{ruhstorfer}, the first author proves that the simple groups of Lie type satisfy the inductive McKay--Navarro conditions for their defining prime, with a few low-rank exceptions that were settled in \cite{johansson}.  In \cite{SF20}, the second author shows that the maps used in \cite{malle07, malle08, MS16} to prove the original inductive McKay conditions for the prime $\ell=2$ are further $\galh_\ell$-equivariant, making them promising candidates for the inductive McKay--Navarro conditions.  

Here we expand on the work of \cite{SF20} to complete the proof that when $\ell=2$, the inductive McKay--Navarro (also called inductive Galois--McKay) conditions from \cite{NSV} are satisfied for the untwisted groups of Lie type that do not admit graph automorphisms. 

\begin{thml}\label{thm:iMN}
  The inductive McKay--Navarro conditions \cite[Definition 3.1]{NSV} hold for the prime $\ell=2$ for simple groups 
 $\PSp_{2n}(q)=\type{C}_n(q)$ with $n\geq 1$;
 $\type{B}_n(q)$ with $n\geq 3$ and $(n,q)\neq (3,3)$;
 $\type{G}_2(q)$ with $q\neq 3^f$; 
 $\type{F}_4(q)$;
 $\type{E}_7(q)$; and
 $\type{E}_8(q)$,
where $q$ is a power of an odd prime.
\end{thml}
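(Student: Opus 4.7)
The plan is to complete the verification of \cite[Definition 3.1]{NSV} by combining the already-established equivariant bijection with a proof of the character-triple condition that remains.

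For each simple group $S$ in the statement, write $S = X / Z(X)$ where $X = \mathbf{G}^F$ is a simply connected simple algebraic group in odd characteristic $p$ with Steinberg endomorphism $F$. The hypothesis that $S$ has no graph automorphism implies that $\mathrm{Out}(S)$ is generated by diagonal and field automorphisms (and the excluded low-characteristic cases for $\type{B}_n$, $\type{G}_2$, $\type{F}_4$ are precisely the ones where an exceptional graph-type isogeny appears). Following \cite{MS16}, I would fix a Sylow $2$-subgroup $R$ of $X$ and a complement $D$ to $\mathrm{Inn}(X)$ in $\mathrm{Aut}(X)$ stabilizing $R$, and set $N := N_X(R)$. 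The bijection $\Omega : \mathrm{Irr}_{2'}(X) \to \mathrm{Irr}_{2'}(N)$ from \cite{MS16} is $\mathrm{Aut}(X)_R$-equivariant, and by \cite{SF20} it is also $\mathcal{H}_2$-equivariant, giving the required $(\mathrm{Aut}(X)_R \times \mathcal{H}_2)$-equivariant bijection.

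What remains from \cite[Definition 3.1]{NSV} is a central-isomorphism condition on character triples: for each $\chi \in \mathrm{Irr}_{2'}(X)$ with $\chi' := \Omega(\chi)$ and $J := \mathrm{Aut}(X)_R \times \mathcal{H}_2$, the triples $(X \rtimes J_\chi, X, \chi)$ and $(N \rtimes J_{\chi'}, N, \chi')$ must be centrally equivalent over $Z(X)$. I plan to establish this by producing compatible projective representations: first extend $\chi$ to $X \rtimes \mathrm{Aut}(X)_\chi$, which for these groups reduces, since $\mathrm{Aut}(X)_\chi / \mathrm{Inn}(X)$ is abelian, to extendibility with respect to a single field automorphism and to the diagonal subgroup. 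These extensions are constructed in \cite{malle07, malle08, MS16}. The corresponding extension of $\chi'$ is then obtained from the explicit description of $\Omega(\chi)$ on the normalizer side, and matching of the associated $2$-cocycles proceeds by tracking the action of $J_\chi$ on a common set of class functions in the spirit of \cite{SF20}.

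The hardest step will be ensuring that these extensions and their cocycles are fixed, up to the appropriate central character, by the Galois action of $\mathcal{H}_2$, not merely by $\mathrm{Aut}(X)$. For characters with trivial $Z(X)_2$-central character, this should follow from the parallel between the $\mathcal{H}_2$-action and the action of a suitable power of the field automorphism on $\mathrm{Irr}_{2'}(X)$ already exploited in \cite{SF20}. The delicate case is $\PSp_{2n}(q)$ and $\type{E}_7(q)$, where $Z(X)$ has nontrivial $2$-part so that central characters interact nontrivially with the Galois action. I would address this by combining the Jordan-decomposition description of $2$-regular characters used in \cite{MS16} with an explicit computation of the $\mathcal{H}_2$-action on the central character, verifying that the Galois conjugate of a chosen extension of $\chi$ differs from the original by a linear character of $J_\chi/X$ that matches the analogous linear character on the normalizer side via $\Omega$. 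This cocycle-matching is the technical heart of the proof.
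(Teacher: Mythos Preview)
Your proposal has the correct overall shape---reduce to the character-triple condition of \cite[Definition~1.5]{NSV} and verify it by matching linear characters attached to extensions---but there are two genuine gaps.

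First, the local subgroup is misidentified. The bijection of \cite{MS16} and its $\mathcal{H}_2$-equivariance in \cite{SF20} are \emph{not} to $N_X(R)$, but to a larger subgroup $M$ with $N_X(R)\leq M<X$: for types $\type{B},\type{G}_2,\type{F}_4,\type{E}_7,\type{E}_8$ it is the normalizer of a Sylow $d$-torus (with $d$ the order of $q$ modulo $4$), and for $\type{C}_n$ it depends on $n$ and $q$ (a torus normalizer when $q\equiv 1\pmod 8$, a wreath product $\Sp_n(q)\wr C_2$ when $n$ is a $2$-power, a direct product otherwise). Your setup with $N=N_X(R)$ is therefore inconsistent with the bijection you cite; you would need either to construct a new equivariant bijection to the Sylow normalizer or to redo the analysis with the correct $M$, and the latter has real content since the structure of $M$ drives the argument.

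Second, and more seriously, the cocycle-matching step is where the actual work lies, and your description (``tracking the action of $J_\chi$ on a common set of class functions'') does not supply a mechanism. The paper's approach is to control extensions of $\chi$ to $G\langle F_0\rangle$ via Harish-Chandra induction for the \emph{disconnected} group $G\rtimes\langle F_0\rangle$: one shows that an extension $\hat\chi$ is uniquely determined by an extension $\hat\delta$ of the cuspidal datum (Proposition~\ref{prop:uniquedescent}), and then the Galois twist of $\hat\chi$ is computed from that of $\hat\delta$ (Corollary~\ref{cor:galhc}). Since the relevant $\delta$ satisfy $\delta^2=1$, one can choose $\hat\delta$ with $F_0$ in its kernel, pinning down $\hat\chi$ explicitly. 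A parallel construction on the $M$-side (Lemma~\ref{lem:localextend}) then shows the linear characters $\mu_2$ agree. Without this disconnected Harish-Chandra machinery you have no way to identify which extension of $\chi$ to $G\langle F_0\rangle$ is the ``right'' one, nor to compute its Galois twist. Finally, note that every $\chi\in\mathrm{Irr}_{2'}(X)$ is automatically trivial on $Z(X)_2$ (odd degree, perfect group), so your proposed case distinction by central character is vacuous; the genuine dichotomy is whether $\wt G_\chi=\wt G$, and the extra work for the $\mu_1$-part arises for $\type{B}_n$ as well as $\type{C}_n$ and $\type{E}_7$.
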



We remark that Theorem \ref{thm:iMN} gives the first result in which a series of groups of Lie type is shown to satisfy the inductive McKay--Navarro conditions for a non-defining prime. Given the results of \cite{SF20}, the bulk of what remains to prove Theorem \ref{thm:iMN} is to study the behavior of extensions of odd-degree characters of groups of Lie type to their inertia group in the automorphism group, and similarly for the local subgroup used in the inductive conditions.  To do this, we utilize work by Digne and Michel \cite{DM94} on Harish-Chandra theory for disconnected groups and work of Malle and Sp{\"a}th \cite{MS16} that tells us that, in our situation, odd-degree characters lie in very specific Harish-Chandra series.

The paper is structured as follows. In Section \ref{sec:HCdiscon}, we discuss a generalization of some aspects of Harish-Chandra theory to the case of disconnected reductive groups, which will help us understand the extensions of the relevant characters in our situation.  In Section \ref{sec:principalseries}, we use this to make some observations specific to the principal series representations of groups of Lie type that will be useful in our extensions. In Section \ref{sec:iMNconds}, we make some additional observations regarding extensions and the inductive McKay--Navarro conditions. In Section \ref{sec:typeC}, we complete the proof of Theorem \ref{thm:iMN} in the case of type $\type{C}$, and in Section \ref{sec:remaining}, we finish the proof in the remaining cases.

\subsection{Notation}
We introduce some standard notation that we will use throughout. For finite groups $X\lhd Y$, if $\psi\in\irr(X)$, we use the notation $\Irr(Y|\psi)$ to denote the set of $\chi\in\irr(Y)$ such that $\langle \psi, \Res_X^Y \chi\rangle\neq 0$.  If $\chi\in\irr(Y)$, we use $\irr(X|\chi)$ to denote the set of $\psi\in\irr(X)$ such that $\langle \psi, \Res_X^Y \chi\rangle\neq 0$. Further, if a group $A$ acts on $\irr(X)$ and $\psi\in \irr(X)$, we write $A_\psi$ for the stabilizer in $A$ of $\psi$.

\section{Harish-Chandra Theory and Disconnected Groups}\label{sec:HCdiscon}

Let $\bG^\circ$ be a connected reductive group defined over an algebraic closure of $\mathbb{F}_p$ and let $F: \bG^\circ \to \bG^\circ$ a Frobenius endomorphism defining an $\mathbb{F}_q$-structure. Consider a quasi-central automorphism $\sigma: \bG^\circ \to \bG^\circ$ commuting with the action of the Frobenius $F$ as in \cite[Definition-Theorem 1.15]{DM94}. We form the reductive group $\bG:=\bG^\circ \rtimes \langle \sigma \rangle$. Let $\bP^\circ$ be a $\sigma$-stable parabolic subgroup of $\bG^\circ$ with $\sigma$-stable Levi complement $\bL^\circ$ and unipotent radical $\mathbf{U}$. We can then consider the parabolic subgroup $\bP:=\bP^\circ \langle \sigma \rangle$ with Levi subgroup $\bL:=\bL^\circ \langle \sigma \rangle$ in $\bG$. In the following, we assume that $(\bL,\bP)$ is $F$-stable such that we can consider the generalized Harish-Chandra induction
$$\HC_{\bL}^{\bG}:=\mathrm{Ind}_{\bP^F}^{\bG^F} \mathrm{Infl}_{\bL^F}^{\bP^F}: \mathbb{Z} \Irr(\bL^F) \to \mathbb{Z}\Irr(\bG^F).$$
We note that, according to \cite[Theorem 3.2]{DM94}, this map satisfies a Mackey formula in the sense of \cite[Definition 3.1]{DM94}. We call a character $\delta \in \Irr(\bL^F)$ cuspidal if any (hence every) irreducible constituent $\delta^\circ \in \Irr((\bL^\circ)^F)$ of $\delta|_{(\bL^\circ)^F}$ is cuspidal.

\begin{lemma}\label{cuspidal}
	Let $\bL$ and $\bM$ be two $F$-stable Levi subgroups of $F$-stable parabolic subgroups of $\bG$ as above.
	If $\delta \in \Irr(\bL^F)$ is cuspidal and $\bM$ is a proper Levi subgroup of $\bL$, then we have ${}^\ast \HC_{\bM}^{\bL} \delta =0$. 
\end{lemma}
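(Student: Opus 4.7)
The plan is to reduce to the corresponding statement for the connected reductive group $\bL^\circ$, where cuspidality has its classical meaning and the vanishing of Harish-Chandra restriction on a cuspidal character is standard. By adjunction of the generalized Harish-Chandra induction and restriction (which is part of the Mackey package from \cite[Theorem 3.2]{DM94}), the claim ${}^\ast \HC_{\bM}^{\bL} \delta = 0$ is equivalent to showing that
\[
\langle \delta,\ \HC_{\bM}^{\bL} \mu \rangle_{\bL^F} = 0 \quad \text{for every } \mu \in \Irr(\bM^F).
\]
So I fix an arbitrary $\mu\in\Irr(\bM^F)$ and aim to show that $\delta$ is not a constituent of $\HC_{\bM}^{\bL}\mu$.

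The key compatibility is that restriction to the connected part intertwines the generalized Harish-Chandra induction with the classical one. Since $\sigma$ commutes with $F$ and lies in each of $\bL^F, \bP_M^F, \bM^F$, we have $\bL^F = (\bL^\circ)^F \langle\sigma\rangle$ and likewise for $\bP_M^F$ and $\bM^F$, and in particular $\bL^F = (\bL^\circ)^F\,\bP_M^F$ with $(\bL^\circ)^F \cap \bP_M^F = (\bP_M^\circ)^F$. Applying the ordinary Mackey formula to this single-double-coset situation, and using that the inflation from $\bM^F$ to $\bP_M^F$ restricts to the inflation from $(\bM^\circ)^F$ to $(\bP_M^\circ)^F$, one obtains
\[
\Res_{(\bL^\circ)^F}^{\bL^F} \HC_{\bM}^{\bL} \mu \;=\; \HC_{\bM^\circ}^{\bL^\circ}\!\bigl(\Res_{(\bM^\circ)^F}^{\bM^F} \mu\bigr),
\]
extended linearly on the sum of constituents.

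Now I invoke the hypothesis. Since $\bM \subsetneq \bL$ and both are semidirect products with $\langle\sigma\rangle$, the connected Levi $\bM^\circ$ is a proper Levi subgroup of $\bL^\circ$. Pick any constituent $\delta^\circ\in\Irr((\bL^\circ)^F)$ of $\delta|_{(\bL^\circ)^F}$; by the definition of cuspidality for $\delta$, this $\delta^\circ$ is cuspidal in the classical sense. The standard result for connected reductive groups then gives ${}^\ast\HC_{\bM^\circ}^{\bL^\circ}\delta^\circ = 0$, so by Frobenius reciprocity on $(\bL^\circ)^F$, the character $\delta^\circ$ does not occur in $\HC_{\bM^\circ}^{\bL^\circ}\mu^\circ$ for any $\mu^\circ\in\Irr((\bM^\circ)^F)$. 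Combined with the displayed formula, $\delta^\circ$ does not occur in the restriction of $\HC_{\bM}^{\bL}\mu$ to $(\bL^\circ)^F$. If $\delta$ were a constituent of $\HC_{\bM}^{\bL}\mu$, then $\delta^\circ$ would appear in $\Res_{(\bL^\circ)^F}^{\bL^F} \HC_{\bM}^{\bL}\mu$, a contradiction.

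The main obstacle, and essentially the only non-bookkeeping step, is establishing the compatibility formula relating $\HC_{\bM}^{\bL}$ and $\HC_{\bM^\circ}^{\bL^\circ}$ via restriction to the connected part. Everything else is formal: adjointness, Clifford-theoretic detection of a constituent via its restriction to a normal subgroup, and the invocation of the classical theorem of Howlett--Lehrer / Harish-Chandra on connected reductive groups. The properness of $\bM^\circ$ in $\bL^\circ$ needs a brief sanity check but follows immediately from $\bM = \bM^\circ\langle\sigma\rangle \subsetneq \bL^\circ\langle\sigma\rangle = \bL$.
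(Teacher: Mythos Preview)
Your proof is correct and rests on the same idea as the paper's: reduce to the classical vanishing of Harish-Chandra restriction on cuspidal characters of the connected group $(\bL^\circ)^F$ via the compatibility of $\HC$ (or ${}^\ast\HC$) with restriction to the identity component.

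The only difference is one of packaging. You first pass to the adjoint side, fixing $\mu\in\Irr(\bM^F)$ and showing $\langle\delta,\HC_{\bM}^{\bL}\mu\rangle=0$ by restricting $\HC_{\bM}^{\bL}\mu$ to $(\bL^\circ)^F$ and detecting the absence of the cuspidal constituent $\delta^\circ$ there. The paper instead applies the compatibility formula directly to ${}^\ast\HC$ (this is exactly \cite[Corollary~2.4]{DM94}):
\[
\Res_{(\bM^\circ)^F}^{\bM^F}\,{}^\ast\HC_{\bM}^{\bL}\delta \;=\; {}^\ast\HC_{\bM^\circ}^{\bL^\circ}\,\Res_{(\bL^\circ)^F}^{\bL^F}\delta \;=\; 0,
\]
and then observes that a genuine character with zero restriction to a normal subgroup must itself vanish. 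This is marginally shorter since it avoids the detour through adjunction and the quantifier over $\mu$, but your argument and the paper's are adjoint to one another and use the same substantive input. Your derivation of the compatibility formula via ordinary Mackey on the single double coset $\bL^F=(\bL^\circ)^F\bP_M^F$ is also what underlies \cite[Corollary~2.4]{DM94}, so you could simply cite that result in place of your displayed computation.
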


\begin{proof}
	By \cite[Corollary 2.4]{DM94} we have 
	$$\Res_{(\bM^\circ)^F}^{\bM^F} {}^\ast \HC_{\bM}^{\bL} \delta={}^\ast \HC_{\bM^\circ}^{\bL^\circ} \Res_{(\bL^\circ)^F}^{\bL^F} \delta=0,$$ where the last equality follow from the fact that all irreducible constituents of $\Res_{(\bL^\circ)^F}^{\bL^F} \delta$ are cuspidal. Since ${}^\ast \HC_{\bM}^{\bL} \delta \in \mathbb{N} \Irr(\bM^F)$ and $\Res_{(\bM^\circ)^F}^{\bM^F} {}^\ast \HC_{\bM}^{\bL} \delta=0$ it follows that ${}^\ast \HC_{\bM}^{\bL} \delta=0$.
\end{proof}

\begin{proposition}\label{scalar product}
Let $\delta \in \Irr(\bL^F)$ and $\eta \in \Irr(\bL^F)$ be two extensions of the same cuspidal character $\delta^\circ \in \Irr((\bL^\circ)^F)$. Suppose that $\delta^\circ$ extends to its inertia group in $\mathrm{N}_{\bG^F}(\bL^\circ)$ and $\sigma$ centralizes $N:=\mathrm{N}_{(\bG^\circ)^F}(\bL^\circ)/(\bL^\circ)^F$. Then we have
$$\langle \HC_\bL^\bG \delta, \HC_\bL^\bG \eta \rangle=|N_{\delta^\circ}| \langle \delta, \eta \rangle.$$
\end{proposition}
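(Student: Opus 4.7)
The strategy is to combine Frobenius reciprocity with the Mackey formula for disconnected Harish-Chandra induction \cite[Theorem 3.2]{DM94}, then exploit the cuspidality of $\delta, \eta$ together with the extension hypothesis on $\delta^\circ$ to evaluate the resulting sum.

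First I would write
$$\langle \HC_\bL^\bG \delta, \HC_\bL^\bG \eta\rangle = \langle \delta, {}^\ast\HC_\bL^\bG \HC_\bL^\bG \eta\rangle_{\bL^F}$$
by Frobenius reciprocity and expand the right-hand side via the Mackey formula. Each summand has the shape $\HC_{\bL\cap {}^w\bL}^{\bL}\, {}^\ast\HC_{\bL\cap {}^w\bL}^{{}^w\bL}\, {}^w\eta$ for $w$ running over a suitable set of representatives. Since $\eta$ is cuspidal, Lemma \ref{cuspidal} applied to the Levi $\bL\cap {}^w\bL$ of ${}^w\bL$ forces each such term to vanish unless $\bL \cap {}^w\bL = {}^w\bL$, i.e., $w$ lies in the relative Weyl group $W_{\bG^F}(\bL):=\mathrm{N}_{\bG^F}(\bL)/\bL^F$. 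Thus
$$\langle \HC_\bL^\bG \delta, \HC_\bL^\bG \eta\rangle = \sum_{w \in W_{\bG^F}(\bL)} \langle \delta, {}^w\eta\rangle_{\bL^F}.$$

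Next I would identify $W_{\bG^F}(\bL)$ with $N$ using the centrality hypothesis. Any $n \in \mathrm{N}_{(\bG^\circ)^F}(\bL^\circ)$ satisfies $n\sigma n^{-1}\sigma^{-1} \in (\bL^\circ)^F$ and therefore normalizes $\bL = \bL^\circ\langle\sigma\rangle$; conversely, every element of $\mathrm{N}_{\bG^F}(\bL)$ can be chosen in $\mathrm{N}_{(\bG^\circ)^F}(\bL^\circ)$ modulo $\bL^F$. For $w \in N$, comparing restrictions to $(\bL^\circ)^F$ gives $\delta|_{(\bL^\circ)^F} = \delta^\circ$ and ${}^w\eta|_{(\bL^\circ)^F} = {}^w\delta^\circ$, so $\langle \delta,{}^w\eta\rangle_{\bL^F} = 0$ unless $w \in N_{\delta^\circ}$.

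The last step is to show ${}^w\eta = \eta$ for all $w \in N_{\delta^\circ}$, which produces the claimed factor $|N_{\delta^\circ}|\langle\delta,\eta\rangle$. Choose a lift $\dot w$ of $w$ inside the inertia group $T$ of $\delta^\circ$ in $\mathrm{N}_{\bG^F}(\bL^\circ)$; by hypothesis, $\delta^\circ$ admits an extension $\widetilde{\delta^\circ}$ to $T$, and then $\widetilde{\delta^\circ}|_{\bL^F}$ is automatically $\dot w$-invariant. By Gallagher's theorem, $\eta = \widetilde{\delta^\circ}|_{\bL^F}\cdot \hat\mu$ for some linear character $\hat\mu$ of $\bL^F/(\bL^\circ)^F$; since this quotient is generated by the image of $\sigma$ and $\dot w$ commutes with $\sigma$ modulo $(\bL^\circ)^F$, $\dot w$ acts trivially on $\bL^F/(\bL^\circ)^F$ and hence fixes $\hat\mu$. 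Thus ${}^{\dot w}\eta = \eta$. I expect the main obstacle to be pinning down the precise indexing and vanishing conditions in the disconnected Mackey formula and confirming the identification $W_{\bG^F}(\bL) \cong N$; once these are in place, the Clifford-theoretic conclusion follows cleanly from the hypotheses.
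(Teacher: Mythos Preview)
Your approach is correct and uses the same essential ingredients as the paper: the Mackey formula from \cite[Theorem~3.2]{DM94}, cuspidality (via Lemma~\ref{cuspidal}) to kill the off-diagonal terms, and the extension hypothesis on $\delta^\circ$ to show that the surviving conjugates ${}^w\eta$ coincide with $\eta$. The difference is organizational. The paper decomposes the scalar product $\sigma$-coset by $\sigma$-coset, applying the Mackey formula on each coset $(\bG^\circ)^F\sigma^i$ separately; there the indexing set is $\mathrm{N}_{((\bG^{\sigma^i})^\circ)^F}((\bL^{\sigma^i})^\circ)/ ((\bL^{\sigma^i})^\circ)^F$, and \cite[Proposition~1.39, Corollary~1.25(ii)]{DM94} together with the centrality of $\sigma$ are invoked to identify each of these quotients with $N$. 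For the invariance step the paper quotes \cite[Lemma~2.11]{spath12}, which is precisely your Gallagher argument in packaged form. Your route---applying Mackey once to the whole disconnected group and identifying $W_{\bG^F}(\bL)\cong N$ directly via the commutator condition $\sigma(n)n^{-1}\in(\bL^\circ)^F$---is more economical, but the obstacle you flag is real: the Mackey formula of \cite{DM94} is naturally stated coset-wise, so pinning down your single sum over $W_{\bG^F}(\bL)$ in effect amounts to carrying out the paper's coset decomposition and reassembling. Either way the substance is identical.
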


\begin{proof}
	We loosely follow the proof of \cite[Proposition 3.2]{DM15}. 
By definition, we have
$$\langle \HC_\bL^\bG \delta, \HC_\bL^\bG \eta \rangle= \frac{1}{r} \sum_{i=1}^r \langle R_{\bL^\circ \sigma^i}^{\bL^\circ \sigma^i}(\delta), R_{\bL^\circ \sigma^i}^{\bG^\circ \sigma^i}(\eta) \rangle_{ (\bG^\circ)^F \sigma^i}$$
for  $\sigma$ of order $r$.
According to the Mackey formula from \cite[Theorem 3.2]{DM94} and the proof of \cite[Lemma 6.5]{DM} we obtain
$$\langle R_{\bL^\circ \sigma^i}^{\bG^\circ \sigma^i}(\delta), R_{\bL^\circ \sigma^i}^{\bG^\circ \sigma^i}(\eta) \rangle_{ (\bG^\circ)^F \sigma^i} = 
\sum_{x \in \mathrm{N}_{((\bG^{\sigma^i})^\circ)^F}((\bL^{\sigma^i})^\circ)^F/ ((\bL^{\sigma^i})^\circ)^F} \langle {}^x\delta, \eta \rangle_{(\bL^\circ)^F \sigma^i}.$$
Note that $0= \langle {}^x \delta, \eta \rangle_{(\bL^\circ)^F \sigma^i}$ unless ${}^x \delta^\circ=\delta^\circ$. 
Hence, $x \in \mathrm{N}_{(\bG^\circ)^F}(\bL^\circ)_{\delta^\circ}$. Since $\delta^\circ$ extends to its inertia group in $\mathrm{N}_{\bG^F}(\bL^\circ)$ we deduce by \cite[Lemma 2.11]{spath12} that ${}^x \delta= \delta$. By \cite[Proposition 1.39]{DM94} and \cite[Corollary 1.25(ii)]{DM94}, we have $$\mathrm{N}_{((\bG^{\sigma^i})^\circ)^F}((\bL^{\sigma^i})^\circ)^F (\bL^\circ)^F=\mathrm{N}_{(\bG^\circ)^F}(\bL^\circ \sigma^i).$$
Since $\sigma$ centralizes $N$ by assumption we deduce that
$$N \cong \mathrm{N}_{((\bG^{\sigma^i})^\circ)^F}((\bL^{\sigma^i})^\circ)^F/ ((\bL^{\sigma^i})^\circ)^F.$$
Therefore, the scalar product evaluates to
$$\langle \HC_\bL^\bG \delta, \HC_\bL^\bG \eta \rangle=\frac{1}{r} |N_{\delta^\circ}| \sum_{i=1}^r \langle \delta,\eta \rangle_{(\bL^\circ)^F \sigma^i}= |N_{\delta^\circ}| \langle \delta, \eta \rangle.$$
This completes the proof.  
\end{proof}

\begin{proposition}\label{uniquev2}
Keep the assumptions of the previous proposition. Let $\chi^\circ \in \Irr((\bG^\circ)^F)$ be a character in the Harish--Chandra series of $\delta^\circ$.
	Then for every character $\chi \in \Irr(\bG^F \mid \chi^\circ)$, there exists a unique $\delta \in \Irr(\bL^F \mid \delta^\circ)$ such that $\langle \HC_{\bL}^{\bG} \delta, \chi \rangle \neq 0$. 
\end{proposition}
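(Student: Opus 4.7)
The plan is to split the claim into existence and uniqueness of $\delta$, and to handle each using the main tools of this section: the compatibility of generalized Harish--Chandra induction with restriction to the connected component established in \cite[Corollary 2.4]{DM94}, and the scalar-product formula of Proposition \ref{scalar product}.

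For the existence half, I would pass to the adjoint functor ${}^\ast\HC_\bL^\bG$, which is realized as the $\mathbf{U}^F$-fixed-point functor after restriction to $\bP^F$ and hence sends genuine characters to genuine characters. By Frobenius reciprocity, it suffices to exhibit some $\delta \in \Irr(\bL^F \mid \delta^\circ)$ occurring in ${}^\ast\HC_\bL^\bG \chi$. Since $\chi^\circ$ lies in the Harish--Chandra series of $\delta^\circ$, adjointness in the connected case gives that $\delta^\circ$ appears in ${}^\ast\HC_{\bL^\circ}^{\bG^\circ} \chi^\circ$. Combining this with the fact that $\chi^\circ$ is a constituent of $\Res_{(\bG^\circ)^F}^{\bG^F} \chi$, together with the restriction-compatibility
$$\Res_{(\bL^\circ)^F}^{\bL^F} \circ {}^\ast\HC_\bL^\bG = {}^\ast\HC_{\bL^\circ}^{\bG^\circ} \circ \Res_{(\bG^\circ)^F}^{\bG^F}$$
from \cite[Corollary 2.4]{DM94}, we see that $\delta^\circ$ is a constituent of $\Res_{(\bL^\circ)^F}^{\bL^F} {}^\ast\HC_\bL^\bG \chi$. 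Hence some irreducible constituent $\delta$ of the genuine character ${}^\ast\HC_\bL^\bG \chi$ lies in $\Irr(\bL^F \mid \delta^\circ)$, and for this $\delta$ we have $\langle \HC_\bL^\bG \delta, \chi \rangle \neq 0$ by adjointness.

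For uniqueness, suppose $\delta_1, \delta_2 \in \Irr(\bL^F \mid \delta^\circ)$ both have $\chi$ as a constituent of $\HC_\bL^\bG \delta_i$. Under the standing assumptions kept from Proposition \ref{scalar product}, the characters in $\Irr(\bL^F \mid \delta^\circ)$ are irreducible extensions of $\delta^\circ$, so that proposition applies and gives
$$\langle \HC_\bL^\bG \delta_1, \HC_\bL^\bG \delta_2 \rangle = |N_{\delta^\circ}|\, \langle \delta_1, \delta_2 \rangle.$$
Because the common constituent $\chi$ makes the left-hand side strictly positive, we conclude $\langle \delta_1, \delta_2 \rangle \neq 0$ and hence $\delta_1 = \delta_2$. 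The main delicate point I foresee is confirming that in the present setup every element of $\Irr(\bL^F \mid \delta^\circ)$ really is an extension of $\delta^\circ$, so that Proposition \ref{scalar product} applies to the pair $(\delta_1,\delta_2)$; once this is verified the argument reduces to the two adjointness manipulations described above.
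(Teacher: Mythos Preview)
Your argument is correct and essentially matches the paper's: for existence the paper uses that $\HC_\bL^\bG$ commutes with ordinary induction together with Frobenius reciprocity, which is precisely the adjoint formulation of your use of \cite[Corollary 2.4]{DM94}, and for uniqueness both proofs appeal to Proposition~\ref{scalar product} in the same way. As for your flagged concern, since $\bL^F/(\bL^\circ)^F=\langle\sigma\rangle$ is cyclic and the assumptions kept from Proposition~\ref{scalar product} already presuppose that $\delta^\circ$ admits extensions to $\bL^F$ (hence is $\sigma$-invariant), Gallagher's theorem guarantees that every element of $\Irr(\bL^F\mid\delta^\circ)$ is an extension of $\delta^\circ$.
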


\begin{proof}
	Since Harish-Chandra induction commutes with induction we have
	$$\langle \chi, \HC_{\bL}^{\bG} \Ind_{(\bL^\circ)^F}^{\bL^F} \delta^\circ \rangle= \langle \chi, \Ind_{(\bG^\circ)^F}^{\bG^F}  \HC_{\bL^\circ }^{\bG^\circ} \delta^\circ \rangle = \langle  \Res_{(\bG^\circ)^F}^{\bG^F}  \chi,  \HC_{\bL^\circ }^{\bG^\circ} \delta^\circ \rangle,$$
	where the last equality follows from Frobenius reciprocity. From this it follows that there exists some $\delta \in \Irr(\bL^F \mid \delta^\circ)$ such that $\langle \HC_{\bL}^{\bG} \delta, \chi \rangle \neq 0$. The uniqueness follows from Lemma \ref{scalar product}.

\end{proof}


\begin{corollary}\label{cor:hcbij}
	Assume that we are in the situation of the previous proposition.
Then we have a $\Irr(\bG^F/ (\bG^\circ)^F) \times \gal_{\delta^\circ}$-equivariant {bijection} 
	$$\Irr(\bG^F \mid \chi^\circ) \to \Irr(\bL^F \mid \delta^\circ), \chi\mapsto\delta.$$
In particular, $\chi$ is an extension of $\chi^\circ$ and we have 
$$\langle \chi, \HC_{\bL}^{\bG}  \delta \rangle = \langle \chi^\circ, \HC_{\bL^\circ}^{\bG^\circ}  \delta^\circ \rangle.$$ 	
\end{corollary}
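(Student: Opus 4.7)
The plan is to establish injectivity of the map $\chi\mapsto\delta$ from Proposition \ref{uniquev2} and then conclude bijectivity by matching cardinalities. First I would verify that under the standing hypotheses, every character in the Harish-Chandra series of $\delta^\circ$ is $\bG^F$-stable. Since this series is parametrized---via the generalized Howlett--Lehrer theory of \cite{DM94}---by $\Irr(W_{(\bG^\circ)^F}(\bL^\circ,\delta^\circ))$, and this relative Weyl group is a subgroup of $N$ on which $\sigma$ acts trivially by hypothesis, $\sigma$ fixes every constituent of $\HC_{\bL^\circ}^{\bG^\circ}\delta^\circ$. Together with $\bG^F=(\bG^\circ)^F\langle\sigma\rangle$, this forces each such $\chi^\circ$ to be $\bG^F$-stable; $\delta^\circ$ is already $\bL^F$-stable since it admits extensions to $\bL^F$. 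Because both $\bG^F/(\bG^\circ)^F$ and $\bL^F/(\bL^\circ)^F$ are cyclic of the same order $r$ (the order of $\sigma$, which injects into both quotients), each of $\Irr(\bG^F\mid\chi^\circ)$ and $\Irr(\bL^F\mid\delta^\circ)$ consists of exactly $r$ extensions.

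The decisive tool for the remainder is the tensor identity $\HC_\bL^\bG(\lambda|_{\bL^F}\delta)=\lambda\cdot\HC_\bL^\bG(\delta)$ for any $\lambda\in\Irr(\bG^F/(\bG^\circ)^F)$. This follows from the definition $\HC_\bL^\bG=\Ind_{\bP^F}^{\bG^F}\circ\mathrm{Infl}_{\bL^F}^{\bP^F}$ combined with the projection formula, using that $\lambda$ is trivial on $\mathbf U^F\subseteq(\bG^\circ)^F$ and hence descends through $\bP^F/\mathbf U^F=\bL^F$. Injectivity of the map then follows: if $\chi_1,\chi_2\in\Irr(\bG^F\mid\chi^\circ)$ both map to the same $\delta$, write $\chi_2=\lambda\chi_1$ for some $\lambda\in\Irr(\bG^F/(\bG^\circ)^F)$. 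The tensor identity gives $\langle\chi_1,\HC_\bL^\bG(\bar\lambda|_{\bL^F}\delta)\rangle\neq 0$, whence by Proposition \ref{uniquev2} $\bar\lambda|_{\bL^F}\delta=\delta$. Since the restriction $\Irr(\bG^F/(\bG^\circ)^F)\to\Irr(\bL^F/(\bL^\circ)^F)$ is an isomorphism (both groups cyclic of order $r$, generated by the image of $\sigma$), $\lambda=1$ and $\chi_1=\chi_2$. Combined with the cardinality match this yields the bijection. The $\Irr(\bG^F/(\bG^\circ)^F)$-equivariance is then immediate from the tensor identity, while the $\gal_{\delta^\circ}$-equivariance follows because Harish-Chandra induction commutes with Galois automorphisms and each $\tau\in\gal_{\delta^\circ}$ preserves the Harish-Chandra series of $\delta^\circ$, in fact fixing each constituent $\chi^\circ$ by the same triviality-on-$W_{(\bG^\circ)^F}(\bL^\circ,\delta^\circ)$ argument applied to $\tau$.

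The remaining assertions are now essentially free. That $\chi$ extends $\chi^\circ$ is automatic once we know $\chi^\circ$ is $\bG^F$-stable with cyclic quotient. For the scalar product, sum the uniqueness statement over $\delta'\in\Irr(\bL^F\mid\delta^\circ)$:
$$\sum_{\delta'}\langle\chi,\HC_\bL^\bG\delta'\rangle=\bigl\langle\chi,\HC_\bL^\bG\bigl(\Ind_{(\bL^\circ)^F}^{\bL^F}\delta^\circ\bigr)\bigr\rangle=\bigl\langle\chi,\Ind_{(\bG^\circ)^F}^{\bG^F}\HC_{\bL^\circ}^{\bG^\circ}\delta^\circ\bigr\rangle=\langle\chi^\circ,\HC_{\bL^\circ}^{\bG^\circ}\delta^\circ\rangle,$$
using that Harish-Chandra induction commutes with ordinary induction, Frobenius reciprocity, and $\Res\chi=\chi^\circ$. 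By Proposition \ref{uniquev2} only the term $\delta'=\delta$ survives on the left, giving the desired equality. The main obstacle is securing the stability of $\chi^\circ$ under $\sigma$ (and under $\gal_{\delta^\circ}$) in the disconnected setting, which requires invoking the appropriate generalization of Howlett--Lehrer from \cite{DM94} together with the hypothesis that $\sigma$ centralizes $N$; once that step is in hand, the rest is a routine chain of the tensor identity, Frobenius reciprocity, and the uniqueness from Proposition \ref{uniquev2}.
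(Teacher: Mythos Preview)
Your argument is essentially correct but takes a different route from the paper. The paper establishes the bijection by proving \emph{surjectivity} directly: using $\bG^F=(\bG^\circ)^F\bL^F$ and $(\bG^\circ)^F\cap\bL^F=(\bL^\circ)^F$, one has $\Res^{\bG^F}_{(\bG^\circ)^F}\HC_\bL^\bG\delta=\HC_{\bL^\circ}^{\bG^\circ}\delta^\circ$, so every $\delta\in\Irr(\bL^F\mid\delta^\circ)$ is hit by some $\chi\in\Irr(\bG^F\mid\chi^\circ)$. Since $|\Irr(\bG^F\mid\chi^\circ)|\leq r=|\Irr(\bL^F\mid\delta^\circ)|$ automatically, bijectivity follows, and the $\sigma$-invariance of $\chi^\circ$ drops out as a \emph{consequence} rather than an input. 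Your approach instead proves $\sigma$-invariance of $\chi^\circ$ first (via Howlett--Lehrer), then matches cardinalities and checks injectivity. Both work; the paper's is more economical because it bypasses the Howlett--Lehrer parametrization entirely. Note also that your Howlett--Lehrer step is slightly incomplete: centralizing $W(\delta^\circ)$ is not by itself enough---you need a $\sigma$-stable extension of $\delta^\circ$ to $\mathrm{N}_{(\bG^\circ)^F}(\bL^\circ)_{\delta^\circ}$ so that the bijection is $\sigma$-equivariant. This is available from the hypothesis that $\delta^\circ$ extends to its inertia group in $\mathrm{N}_{\bG^F}(\bL^\circ)$, but should be said.

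One genuine wrinkle: your justification that each $\tau\in\gal_{\delta^\circ}$ fixes $\chi^\circ$ ``by the same triviality-on-$W$ argument applied to $\tau$'' does not work. The argument for $\sigma$ relies on $\sigma$ being a group automorphism centralizing $W(\delta^\circ)$; a Galois automorphism acts on characters, not on the group, and the induced permutation of $\Irr(W(\delta^\circ))$ is in general nontrivial (this is precisely the content of results like \cite[Theorem~3.8]{SFgaloisHC}, and the reason Lemma~\ref{lem: ext} below is needed). Fortunately this does not damage your proof: the equivariance $\chi\mapsto\delta\Rightarrow\chi^\tau\mapsto\delta^\tau$ follows already from Harish-Chandra induction commuting with $\tau$, which is all the corollary actually uses. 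But the parenthetical claim that $\tau$ fixes $\chi^\circ$ should be dropped.
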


\begin{proof}
The map itself follows from Proposition \ref{uniquev2}, since the fact that $\bG^F=(\bG^\circ)^F\bL^F$ and $(\bG^\circ)^F\cap \bL^F=(\bL^\circ)^F$ yields that $\Res_{(\bG^\circ)^F}^{\bG^F}\Ind_{\bL^F}^{\bG^F} = \Ind_{(\bL^\circ)^F}^{(\bG^\circ)^F}\Res_{(\bL^\circ)^F}^{\bL^F}$ (see e.g. \cite[Problem (5.2)]{isaacs}), implying that given $\delta\in\Irr(\bL^F \mid \delta^\circ)$, there exists $\chi\in\Irr(\bG^F \mid \chi^\circ)$ with $\langle \HC_{\bL}^{\bG}\delta, \chi\rangle\neq 0$.  
For $\lambda \in \Irr(\bG^F / (\bG^\circ)^F)$ we have $\lambda \HC_{\bL}^{\bG} \delta=\HC_{\bL}^{\bG} \lambda \delta$, by e.g. \cite[Problem 5.3]{isaacs}, so the compatibility with linear characters follows from the uniqueness statement in Proposition \ref{uniquev2}. As $\HC_{\bL}^{\bG}$ is equivariant with respect to Galois-automorphisms we similarly obtain that the map is $\gal_{\delta^\circ}$-equivariant. Altogether this implies that $\chi^\circ$ extends to $\bG^F$ and the desired multiplicity formula.
\end{proof}

The previous corollary allows us in principle to compute the action of the Galois group $\gal_{\delta^\circ}$ on constituents of $\HC_{\bL}^{\bG}(\delta)$. However, it could happen that $\gal_{\chi^\circ}$ is not contained in  $\gal_{\delta^\circ}$.
The following lemma solves this problem using the assumption that $\sigma$ centralizes the relative Weyl group of $\bL$. 

\begin{lemma}\label{lem: ext}
{Keep the notation and assumption from Proposition \ref{uniquev2}.}
\begin{enumerate}
	\item 
We have $\norm_{\bG^F}(\bL)=\{ g\in \norm_{(\bG^\circ)^F}(\bL^\circ) \mid \sigma(g)g^{-1} \in (\bL^\circ)^F \} \langle \sigma \rangle$.
\item 
Suppose that $\tau \in \gal$ stabilizes $\chi^\circ$. If $\sigma$ centralizes $\norm_{(\bG^\circ)^F}(\bL^\circ)/ (\bL^\circ)^F$ and stabilizes $\chi^\circ$, then we have $\chi^{\tau}=\chi \lambda$ for some
$\lambda \in \Irr(\bG^F)$.
\end{enumerate}
\end{lemma}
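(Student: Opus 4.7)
For part (1), the plan is to unfold the semidirect product structure $\bG^F = (\bG^\circ)^F \rtimes \langle \sigma \rangle$, which is forced by the assumption that $F$ and $\sigma$ commute. Every element of $\bG^F$ is uniquely of the form $g_0 \sigma^j$ with $g_0 \in (\bG^\circ)^F$. Since $\sigma$ already normalizes $\bL^\circ$, the conjugation action of $g_0\sigma^j$ on $\bL^\circ$ agrees with that of $g_0$, so $g_0\sigma^j$ normalizes $\bL^\circ$ iff $g_0 \in \norm_{(\bG^\circ)^F}(\bL^\circ)$. Under that condition, $g_0\sigma^j$ normalizes $\bL = \bL^\circ\langle\sigma\rangle$ iff $g_0\sigma^j \cdot \sigma \cdot (g_0\sigma^j)^{-1} \in \bL$, and a direct computation in the semidirect product rewrites this conjugate as $\bigl(g_0\,\sigma(g_0)^{-1}\bigr)\sigma$. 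Hence the condition becomes $\sigma(g_0)g_0^{-1} \in \bL^\circ$, and since this element already lies in $(\bG^\circ)^F$, it lies in $(\bL^\circ)^F$. This yields precisely the description claimed.

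For part (2), the plan is to use Corollary \ref{cor:hcbij} together with Gallagher's theorem. The hypotheses of part (2) (that $\sigma$ centralizes $\norm_{(\bG^\circ)^F}(\bL^\circ)/(\bL^\circ)^F$ and stabilizes $\chi^\circ$) put us in the situation of Corollary \ref{cor:hcbij}, which asserts that each $\chi \in \Irr(\bG^F \mid \chi^\circ)$ is in fact an extension of $\chi^\circ$. Thus $\Res^{\bG^F}_{(\bG^\circ)^F}\chi = \chi^\circ$. Applying $\tau$ and using that restriction commutes with the Galois action, we get
$$\Res^{\bG^F}_{(\bG^\circ)^F}\chi^\tau \,=\, \tau(\chi^\circ) \,=\, \chi^\circ,$$
so $\chi^\tau$ is also an extension of $\chi^\circ$. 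Since $(\bG^\circ)^F \lhd \bG^F$, Gallagher's theorem identifies the set of extensions of $\chi^\circ$ as an orbit under tensoring by $\Irr(\bG^F/(\bG^\circ)^F)$, so $\chi^\tau = \chi\lambda$ for a (unique) $\lambda$ in this set.

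The main obstacle is really bookkeeping rather than mathematical difficulty. For (1), one has to be careful with the semidirect-product conventions (whether $\sigma$ acts on $g_0$ or $g_0^{-1}$) to arrive at exactly the stated cocycle condition $\sigma(g)g^{-1} \in (\bL^\circ)^F$. For (2), the delicate point is verifying that Corollary \ref{cor:hcbij} really applies, so that $\chi$ is genuinely an extension and not merely a constituent above $\chi^\circ$; once this is in hand, Gallagher finishes the argument cleanly, and the extra hypothesis of $\sigma$-stability of $\chi^\circ$ in (2) (beyond what was assumed in Proposition \ref{uniquev2}) is exactly what ensures the extension property.
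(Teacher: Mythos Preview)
Your argument for part (1) is correct and is exactly the ``easy computation'' the paper alludes to.

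For part (2), your proof is correct but takes a genuinely different and shorter route than the paper. You observe that Corollary \ref{cor:hcbij} already tells you $\chi$ is an extension of $\chi^\circ$, so $\chi^\tau$ is another extension of $(\chi^\circ)^\tau=\chi^\circ$, and Gallagher finishes. The paper instead works at the cuspidal level: it uses that $(\delta^\circ)^{\tau x}=\delta^\circ$ for some $x\in\norm_{(\bG^\circ)^F}(\bL^\circ)$, invokes part (1) together with the hypothesis that $\sigma$ centralizes the relative Weyl group to see that $x$ normalizes $\bL^F$, writes $\delta^{\tau x}=\delta\lambda$, and then pushes this up via $\HC_\bL^\bG$ and the uniqueness in Corollary \ref{cor:hcbij} to conclude $\chi^\tau=\chi\lambda$. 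The payoff of the paper's longer argument is that it \emph{identifies} $\lambda$ as the linear character satisfying $\delta^{\tau x}=\delta\lambda$; this extra information is precisely what is recorded in the ``In particular'' clause of Corollary \ref{cor:galhc} and is used downstream (e.g.\ in Lemma \ref{lem:principalextend}). Your approach proves the lemma as stated but does not furnish this link between $\lambda$ and the action on $\delta$.

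One small correction to your commentary: the extension property of $\chi$ over $\chi^\circ$ is already guaranteed by Corollary \ref{cor:hcbij} under the standing hypotheses of Proposition \ref{uniquev2} (since $\delta^\circ$ is $\sigma$-stable there, the bijection forces $|\Irr(\bG^F\mid\chi^\circ)|=|\Irr(\bL^F\mid\delta^\circ)|=|\langle\sigma\rangle|$). So the $\sigma$-stability of $\chi^\circ$ listed in part (2) is not an additional ingredient needed for your argument; it is in fact a consequence of the setup rather than the reason the extension exists.
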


\begin{proof}
	Part (1) is an easy computation. Let us prove part (2).	Since $\tau \in \gal$ stabilizes $\chi^\circ$ we have $(\delta^\circ)^{\tau x}=\delta^\circ$ for some $x \in \norm_{(\bG^\circ)^F}(\bL^\circ)$ by \cite[Lemma 6.5]{DM}. 
	By assumption, $\sigma$ centralizes $\norm_{(\bG^\circ)^F}(\bL^\circ)/ (\bL^\circ)^F$ and so we have we have $\sigma(x) x^{-1} \in {\bL^\circ}^F$. 
	From this we deduce that $x$ normalizes $\bL^F$ and thus there exists some linear character $\lambda$ {of $\bL^F/(\bL^\circ)^F\cong {\bG}^F/(\bG^\circ)^F$} such that $\delta^{\tau x}=\delta \lambda$. We deduce that 
	$$0 \neq \langle \chi^{\tau x} , \HC_\bL^\bG \delta^{\tau x} \rangle =\langle \chi^{\tau x}, \HC_\bL^\bG \delta \lambda \rangle = \langle \chi^{\tau x} \lambda^{-1}, \HC_\bL^\bG \delta \rangle.$$
	Since both $\chi^{\tau x} \lambda^{-1}$ and $\chi$ cover $\chi^\circ,$ we obtain $\chi^{\tau x} \lambda^{-1}=\chi^\tau \lambda^{-1} = \chi$ by Corollary \ref{cor:hcbij}.
\end{proof}

\subsection{Descent of scalars}\label{sec:descentscalars}

Let us from now on assume that $\bG$ is a connected reductive group with Frobenius endomorphism $F: \bG \to \bG$. If $\mathbf{H}$ is an $F$-stable subgroup of $\bG$  then we write $H$ for the set $\mathbf{H}^F$ of $F$-fixed points.

We assume that $F_0: \bG \to \bG$ is a Frobenius endomorphism satisfying $F_0^r=F$ for some positive integer $r$.
For an $F_0$-stable subgroup $\mathbf{H}$ of $\bG$, we consider $\underline{\bH}= \bH^r$ with Frobenius endomorphism $F_0 \times \dots \times F_0: \underline{\bH} \to \underline{\bH}$ which we also denote by $F_0$ and the permutation
$$\pi: \underline{\bH} \to \underline{\bH}, \, (g_1,\dots,g_r) \mapsto (g_2,\dots,g_r,g_1).$$
The projection map $\mathrm{pr}: \underline{\bH} \to \bH$ onto the first coordinate then yields an isomorphism $\underline\bH^{\pi F_0} \cong \bH^{F}$ and the automorphism $\pi \in \mathrm{Aut}(\underline{\bG}^{ \pi F_0})$ corresponds to $F_0^{-1} \in \mathrm{Aut}(\bG^F)$ under this isomorphism. Moreover, $\mathrm{pr}$ induces a bijection $\mathrm{pr}^\vee: \Irr(\bH^F) \to \Irr(\underline \bH^{\pi F_0})$ on the level of characters.

Note that $\pi$ is a quasi-central automorphism of $\bG$. We fix an $F_0$-stable parabolic subgroup $\bP$ of $\bG$ with $F_0$-stable Levi subgroup $\bL$. We assume for the remainder of this section that $F_0$ centralizes the relative Weyl group $\mathrm{N}_{G}(\bL)/L$. This implies that the projection map $\mathrm{pr}$ maps
$\norm_{\underline{\bG}^{\pi F_0}}(\underline{\bL} \langle \pi \rangle)$ onto $\norm_{G}(\bL)$.

For any $F_0$-stable subgroup $H$ of $G$ we set $\hat H:=H \langle F_0 \rangle$. We define a generalized Harish-Chandra induction
$$\HC_{\hat L}^{\hat G}:=\Ind_{\hat P}^{\hat G} \circ \mathrm{Infl}_{\hat L}^{\hat P},$$
and by construction we obtain $\mathrm{pr}^\vee \HC_{\hat L}^{\hat G}=\HC_{\underline{\bL} \langle \pi \rangle }^{\underline{\bG} \langle \pi \rangle} \mathrm{pr}^\vee.$
With this notation, we can reformulate Proposition \ref{uniquev2}:

\begin{proposition}\label{prop:uniquedescent}
	Suppose that $\delta \in \Irr(L)$ is an $F_0$-invariant cuspidal character that extends to a character of $\norm_{G}(\bL)_{\delta} \langle F_0 \rangle$, and let $\chi \in \Irr(G)$ be an $F_0$-invariant character in the Harish--Chandra series of $\delta$. Then for $\hat \chi \in \Irr(\hat G \mid \chi)$, there exists a unique $\hat \delta \in \Irr(\hat L \mid \delta)$ such that $\langle \HC_{\hat L}^{\hat G} \hat \delta, \hat \chi \rangle \neq 0$.  {If $\chi$ is $F_0$-invariant, this yields a bijection $\irr(\hat{G}\mid\chi)\rightarrow  \irr(\hat{L}\mid \delta)$.}
\end{proposition}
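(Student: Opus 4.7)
The plan is to invoke Proposition \ref{uniquev2} and Corollary \ref{cor:hcbij} after passing to the descented setting. Taking $\bG^\circ:=\underline{\bG}$, $\sigma:=\pi$, $F:=\pi F_0$, $\bL^\circ:=\underline{\bL}$, and $\bL:=\underline{\bL}\langle\pi\rangle$ puts us in the setup of Section \ref{sec:HCdiscon}: $\pi$ is quasi-central and commutes with $\pi F_0$. The isomorphism $\mathrm{pr}\colon\underline{\bG}^{\pi F_0}\langle\pi\rangle\xrightarrow{\sim}\hat G$ (under which $\pi\leftrightarrow F_0^{-1}$) and its analogue for $\hat L$ translate all our data via $\mathrm{pr}^\vee$; set $\chi^\circ:=\mathrm{pr}^\vee\chi$, $\delta^\circ:=\mathrm{pr}^\vee\delta$, and similarly for any candidate extensions $\hat\chi^{\,\circ}$ and $\hat\delta^{\,\circ}$.

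Next, I would verify the hypotheses of Proposition \ref{uniquev2} in this picture. Cuspidality of $\delta^\circ$ follows from cuspidality of $\delta$ together with the intertwining $\mathrm{pr}^\vee\HC_{\hat L}^{\hat G}=\HC_{\underline{\bL}\langle\pi\rangle}^{\underline{\bG}\langle\pi\rangle}\mathrm{pr}^\vee$ recorded in this section, applied through the adjoint to an arbitrary proper Levi of $\underline{\bL}$, which corresponds to one of $\bL$. The $F_0$-invariance of $\delta$ forces $\pi$-invariance of $\delta^\circ$, so the hypothesized extension of $\delta$ to $\norm_G(\bL)_\delta\langle F_0\rangle$ transports to an extension of $\delta^\circ$ to its inertia group inside $\mathrm{N}_{\underline{\bG}^{\pi F_0}\langle\pi\rangle}(\underline{\bL})$, using the identification of normalizers via $\mathrm{pr}$ recorded immediately before the statement. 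The standing hypothesis that $F_0$ centralizes $\norm_G(\bL)/L$ becomes precisely the condition that $\pi$ centralizes $\mathrm{N}_{\underline{\bG}^{\pi F_0}}(\underline{\bL})/\underline{\bL}^{\pi F_0}$, and $\chi^\circ$ lies in the Harish--Chandra series of $\delta^\circ$ by the same intertwining.

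With these hypotheses in hand, Proposition \ref{uniquev2} gives, for each $\hat\chi^{\,\circ}\in\Irr(\underline{\bG}^{\pi F_0}\langle\pi\rangle\mid\chi^\circ)$, a unique $\hat\delta^{\,\circ}\in\Irr(\underline{\bL}^{\pi F_0}\langle\pi\rangle\mid\delta^\circ)$ with $\langle\HC_{\underline{\bL}\langle\pi\rangle}^{\underline{\bG}\langle\pi\rangle}\hat\delta^{\,\circ},\hat\chi^{\,\circ}\rangle\neq 0$. Translating back through the bijection $\mathrm{pr}^\vee$ and the intertwining $\mathrm{pr}^\vee\HC_{\hat L}^{\hat G}=\HC_{\underline{\bL}\langle\pi\rangle}^{\underline{\bG}\langle\pi\rangle}\mathrm{pr}^\vee$ produces the required unique $\hat\delta\in\Irr(\hat L\mid\delta)$. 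For the asserted bijection $\Irr(\hat G\mid\chi)\to\Irr(\hat L\mid\delta)$ under $F_0$-invariance of $\chi$ (which gives $\pi$-invariance of $\chi^\circ$), I would instead invoke Corollary \ref{cor:hcbij} on the descented side and transport back via $\mathrm{pr}^\vee$.

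The main obstacle is purely bookkeeping: carefully matching normalizers, inertia groups, and Harish--Chandra series across the descent correspondence, and confirming that the extension hypothesis on $\delta$ is exactly the one needed for Proposition \ref{uniquev2} on $\delta^\circ$. Since both essential ingredients --- the intertwining $\mathrm{pr}^\vee\HC=\HC\,\mathrm{pr}^\vee$ and the identification of normalizers via $\mathrm{pr}$ --- were just established in this subsection, the translation requires no new ideas beyond this verification.
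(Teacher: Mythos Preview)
Your proposal is correct and follows essentially the same approach as the paper's proof: both transport the data to the disconnected group $\underline{\bG}\langle\pi\rangle$ via $\mathrm{pr}^\vee$, verify the hypotheses of Proposition~\ref{uniquev2} there (using the identification $\mathrm{N}_{(\underline{\bG}\langle\pi\rangle)^{\pi F_0}}(\underline{\bL}\langle\pi\rangle)\cong\norm_G(\bL)\langle F_0\rangle$ and the standing assumption that $F_0$ centralizes $\norm_G(\bL)/L$), and then transport the conclusion back using the intertwining $\mathrm{pr}^\vee\HC_{\hat L}^{\hat G}=\HC_{\underline{\bL}\langle\pi\rangle}^{\underline{\bG}\langle\pi\rangle}\mathrm{pr}^\vee$. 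Your verification is more detailed than the paper's, but the argument is the same; note that cuspidality of $\delta^\circ$ is immediate from the isomorphism $\underline{\bL}^{\pi F_0}\cong L$ without needing to pass through the intertwining and adjoints.
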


\begin{proof}
	To prove this, consider Proposition \ref{uniquev2} in the case of the $\pi$-invariant character $\delta_0:=\mathrm{pr}^\vee(\delta) \in \Irr(\underline{\bL}^{\pi F_0})$ which is a cuspidal character of $\underline \bL^{\pi F_0} \cong \bL^F$. Using the fact that $\norm_{(\underline{\bG} \langle \pi \rangle)^{\pi F_0}}(\underline{\bL} \langle \pi \rangle) \cong \norm_{G}(\bL) \langle F_0 \rangle$ via $\mathrm{pr}$ we see that the assumptions of Proposition \ref{uniquev2} are all satisfied. The commutation of Harish-Chandra induction with $\mathrm{pr}^\vee$ then yields the result.
\end{proof}

In particular, we obtain the following consequence.

\begin{corollary}\label{cor:galhc}
	Keep the notation and assumption from Proposition \ref{prop:uniquedescent} and assume that $\chi$ is $F_0$-invariant. Suppose that $\tau \in \gal$ stabilizes $\chi$. Then we have $\hat \chi^{\tau}=\hat \chi \lambda$ for some $\lambda \in \Irr(G \langle F_0 \rangle /G)$.  {In particular,  $\la$ is such that $\delta^{\tau x}=\delta$ and $\wh{\delta}^{\tau x}=\wh{\delta}\la$ for some $x \in \norm_{G}(\bL)$.}
\end{corollary}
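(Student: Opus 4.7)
The plan is to deduce this as an immediate consequence of Lemma \ref{lem: ext}(2) after passing through the descent-of-scalars machinery set up in Section \ref{sec:descentscalars}. Concretely, I would work with $\underline{\bG}=\bG^r$ equipped with the Frobenius $\pi F_0$ and the quasi-central automorphism $\pi$, so that $(\underline{\bG}\langle\pi\rangle, \underline{\bG})$ plays the role of $(\bG,\bG^\circ)$ in Proposition \ref{scalar product} and Lemma \ref{lem: ext}. The projection map $\mathrm{pr}$ identifies $\underline{\bG}^{\pi F_0}\cong G$, $\underline{\bL}^{\pi F_0}\cong L$, and $(\underline{\bG}\langle\pi\rangle)^{\pi F_0}\cong \hat{G}$, $(\underline{\bL}\langle\pi\rangle)^{\pi F_0}\cong \hat{L}$. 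Under $\mathrm{pr}^\vee$, the $F_0$-invariant characters $\chi\in\Irr(G)$ and $\delta\in\Irr(L)$ go to $\pi$-invariant characters $\chi_0:=\mathrm{pr}^\vee(\chi)$ and $\delta_0:=\mathrm{pr}^\vee(\delta)$, and the chosen extensions $\hat{\chi}\in\Irr(\hat{G}\mid\chi)$ and $\hat{\delta}\in\Irr(\hat{L}\mid\delta)$ correspond to extensions of $\chi_0$ and $\delta_0$ to $(\underline{\bG}\langle\pi\rangle)^{\pi F_0}$ and $(\underline{\bL}\langle\pi\rangle)^{\pi F_0}$, respectively.

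Next I would verify that the hypotheses of Lemma \ref{lem: ext}(2) hold in this descended setup. The condition that $\pi$ centralizes $\mathrm{N}_{\underline{\bG}^{\pi F_0}}(\underline{\bL})/\underline{\bL}^{\pi F_0}$ is exactly the standing assumption of Section \ref{sec:descentscalars} that $F_0$ centralizes $\mathrm{N}_G(\bL)/L$, transported through $\mathrm{pr}$. The character $\chi_0$ is stabilized by $\pi$ because $\chi$ is $F_0$-invariant, and it is stabilized by $\tau\in\gal$ because $\tau$ commutes with $\mathrm{pr}^\vee$ and fixes $\chi$ by hypothesis. The cuspidality hypothesis on $\delta_0$ and the hypothesis that it extends to its inertia group in $\mathrm{N}_{(\underline{\bG}\langle\pi\rangle)^{\pi F_0}}(\underline{\bL})$ follow from the corresponding statements in Proposition \ref{prop:uniquedescent}.

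With these verifications in place, Lemma \ref{lem: ext}(2) applied to $(\underline{\bG}\langle\pi\rangle, \underline{\bG})$ gives $\chi_0^\tau\mu = \chi_0$ (as an identity between extensions to $(\underline{\bG}\langle\pi\rangle)^{\pi F_0}$) for some linear character $\mu$ of $(\underline{\bG}\langle\pi\rangle)^{\pi F_0}/\underline{\bG}^{\pi F_0}$. Pulling back through $\mathrm{pr}^\vee$ yields $\hat{\chi}^\tau = \hat{\chi}\lambda$ for $\lambda\in\Irr(\hat{G}/G)$, which is the first claim. For the ``in particular'' assertion, I would unpack the proof of Lemma \ref{lem: ext}(2): there one produces an element $x\in\mathrm{N}_{\underline{\bG}^{\pi F_0}}(\underline{\bL})$ with $\delta_0^{\tau x}=\delta_0$ and $(\text{extension of }\delta_0)^{\tau x} = (\text{extension of }\delta_0)\cdot\mu$, and this same $\mu$ is the one that controls $\chi_0$. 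Transporting $x$ back via $\mathrm{pr}$ produces an element of $\mathrm{N}_G(\bL)$ with $\delta^{\tau x}=\delta$ and $\hat{\delta}^{\tau x}=\hat{\delta}\lambda$.

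There is essentially no hard step here; the work is purely bookkeeping between the group $\hat{G}=G\langle F_0\rangle$ and its descent-of-scalars incarnation $(\underline{\bG}\langle\pi\rangle)^{\pi F_0}$, and checking that all of the hypotheses of Lemma \ref{lem: ext}(2) are indeed met by $\chi_0$ and $\delta_0$. The only mild subtlety, which I would flag explicitly, is that in extracting the ``in particular'' statement one needs the \emph{same} linear character $\lambda$ to govern both $\chi$ and $\delta$; this is forced by the uniqueness part of Proposition \ref{uniquev2} (equivalently Corollary \ref{cor:hcbij}), since $\lambda\hat{\chi}$ and $\hat{\chi}^\tau$ are both the unique constituent of $\HC_{\hat{L}}^{\hat{G}}(\hat{\delta}\lambda)$ covering $\chi^\tau=\chi$.
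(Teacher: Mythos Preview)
Your proposal is correct and matches the paper's intended approach: the paper gives no explicit proof, simply introducing the corollary with ``In particular, we obtain the following consequence'' immediately after Proposition~\ref{prop:uniquedescent}, whose proof already set up the descent-of-scalars translation. Passing through $\mathrm{pr}^\vee$ to $(\underline{\bG}\langle\pi\rangle,\underline{\bG})$ and invoking Lemma~\ref{lem: ext}(2) (and its proof, for the ``in particular'' clause relating $\lambda$ to $\hat\delta$) is exactly what is meant; your observation that the uniqueness in Proposition~\ref{uniquev2}/Corollary~\ref{cor:hcbij} forces the same $\lambda$ to govern both $\hat\chi$ and $\hat\delta$ is precisely the mechanism in the proof of Lemma~\ref{lem: ext}(2).
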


\section{Principal Series}\label{sec:principalseries}
In the situation of Theorem \ref{thm:iMN}, most of the characters that we are concerned with will lie in the principal series, so we present here some consequences of the previous section in that situation. 
We begin with the following, which will be useful on the ``local" side.  

\begin{lemma}\label{lem:localextend}

Let $\hat Y=YA$ be a finite group with $Y\lhd \hat Y$, $A\leq \hat Y$, and $Y\cap A=\{1\}$. Let $X\lhd Y$ be an $A$-stable normal subgroup and let $\theta\in\irr(X)$ be $A$-invariant and extend to $\hat Y_\theta=Y_\theta A$. Let $\psi\in\irr(Y|\theta)$ extend to $\hat Y$. Then the following hold.
\begin{enumerate}
\item Let $\chi\in\irr(Y_\theta|\theta)$ such that $\psi=\Ind_{Y_\theta}^Y(\chi)$ via Clifford correspondence.  Then $\chi$ extends to $\hat Y_\theta$ and $\Ind_{\hat Y_\theta}^{\hat Y}$ induces a bijection $\irr(\hat Y_\theta|\chi)\rightarrow \irr(\hat Y|\psi)$ such that extensions of $\chi$ are mapped to extensions of $\psi$.
\item There is an extension $\hat\theta$ of $\theta$ to $\hat X:=XA\lhd \hat Y_\theta$, and for any such extension, $\Res_{ Y_\theta}^{\hat Y_\theta}$ induces a bijection $\irr(\hat Y_\theta|\hat \theta)\rightarrow \irr( Y_\theta|\theta)$.
\item If $\theta(1)=1$, then there is an extension $\hat \psi\in \irr(\hat Y|\psi)$ such that $\gal_{\hat\psi}\cap\gal_{\theta}=\gal_{\psi}\cap\gal_{\theta}$.
\item Continue to assume that $\theta(1)=1$, and suppose further that $|A|$ is relatively prime to the order $o(\theta)$ of $\theta$.  Let $\Gamma\leq\aut(\hat Y)$ preserve $\hat X$ and  $Y$ (and hence also $ X=\hat X\cap Y$).   Then the extension $\hat \psi$ from (3) satisfies $\Gamma_{\hat\psi}\cap\Gamma_{\theta}=\Gamma_{\psi}\cap\Gamma_{\theta}$.
\end{enumerate}
\end{lemma}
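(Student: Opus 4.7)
The plan is to handle (1)--(4) sequentially, with (1) and (2) being standard Clifford theory and (3), (4) the main content of interest.

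For (1), the Mackey identity $\Res_Y^{\hat Y}\Ind_{\hat Y_\theta}^{\hat Y}=\Ind_{Y_\theta}^Y\Res_{Y_\theta}^{\hat Y_\theta}$ (using $\hat Y=Y\hat Y_\theta$ and $Y\cap\hat Y_\theta=Y_\theta$) together with $\hat Y$-invariance of $\psi$ (which by uniqueness of the Clifford correspondence yields $\hat Y_\theta$-invariance of $\chi$) gives the bijection; extensions go to extensions via this identity, since $\Ind_{\hat Y_\theta}^{\hat Y}(\hat\chi)|_Y = \Ind_{Y_\theta}^Y(\chi) = \psi$ when $\hat\chi|_{Y_\theta} = \chi$. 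For (2), an extension $\hat\theta$ is obtained by restricting to $\hat X$ any extension of $\theta$ from $\hat Y_\theta$ (which is automatically $\hat Y_\theta$-invariant as the restriction of a class function), and the restriction bijection is standard Clifford--Gallagher applied to $\hat X\lhd\hat Y_\theta$ using $\hat X\cdot Y_\theta=\hat Y_\theta$ and $\hat X\cap Y_\theta=X$.

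For (3), parts (1) and (2) reduce the problem to finding a $(\gal_\psi\cap\gal_\theta)$-invariant extension $\hat\theta$ of $\theta$ to $\hat X$: the resulting $\hat\chi$ via (2) and $\hat\psi=\Ind_{\hat Y_\theta}^{\hat Y}(\hat\chi)$ via (1) then inherit the invariance by Galois-equivariance of restriction and induction (noting that $\chi$ is $(\gal_\psi\cap\gal_\theta)$-fixed by uniqueness of the Clifford correspondent). I would take the explicit ``trivial-on-$A$'' extension $\hat\theta(xa):=\theta(x)$, which is well-defined on $\hat X=XA$ (since $X\cap A\leq Y\cap A=\{1\}$) and is a homomorphism by the $A$-invariance of $\theta$. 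It is manifestly $\gal_\theta$-invariant: $\hat\theta^\tau(xa)=\tau(\theta(x))=\theta(x)$.

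For (4), I would reuse the same $\hat\psi$. The key point is that when $\gcd(|A|,o(\theta))=1$, the chosen $\hat\theta$ has order $o(\theta)$ and is the \emph{unique} extension of $\theta$ of this order: two such extensions differ by some $\mu\in\Hom(\hat X/X,\mathbb{C}^\times)$ with $o(\mu)\mid|A|$, and the computation $(\hat\theta\mu)^{o(\theta)}=\mu^{o(\theta)}$ forces $o(\mu)\mid o(\theta)$, whence $\mu=1$ by coprimality. For any $\sigma\in\Gamma_\theta$, ${}^\sigma\hat\theta$ extends ${}^\sigma\theta=\theta$ with the same order as $\hat\theta$, so ${}^\sigma\hat\theta=\hat\theta$. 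Since $\sigma\in\Gamma_\theta$ also preserves $\hat Y_\theta$ (the full stabilizer of $\theta$ in $\hat Y$), induction commutes with $\sigma$, and the invariance propagates through (2) and (1) exactly as in (3) to give ${}^\sigma\hat\psi=\hat\psi$ for $\sigma\in\Gamma_\psi\cap\Gamma_\theta$.

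The main obstacle is the intrinsic characterization of $\hat\theta$ needed in (4): a general $\sigma\in\Gamma$ may permute complements of $X$ in $\hat X$, so the trivial-on-$A$ formula is not directly $\sigma$-invariant. Coprimality rescues this by uniqueness of the minimal-order extension. A secondary technical point is verifying $\hat Y_\theta$-invariance of the trivial-on-$A$ choice in (3) (as needed to invoke (2)); if this fails, one must instead twist an a priori $\hat Y_\theta$-invariant extension $\tilde\theta|_{\hat X}$ by a suitable coboundary in $\Hom(\hat Y_\theta/X,\mathbb{C}^\times)$ to make it $\gal$-invariant. In (4), the uniqueness argument handles both concerns simultaneously.
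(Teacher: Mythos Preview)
Your proposal is correct and matches the paper's proof essentially line by line: the same Mackey identity for (1), the same restricted extension for (2), the same trivial-on-$A$ extension $\hat\theta$ for (3), and the same uniqueness-of-minimal-order argument (the paper cites \cite[Corollary (8.16)]{isaacs}) for (4). Your flagged concern about $\hat Y_\theta$-invariance of the trivial-on-$A$ extension is unnecessary: since $\hat X/X$ injects into $\hat Y/Y$ under the quotient map and $Y$ acts trivially on $\hat Y/Y$, the subgroup $Y_\theta$ centralizes $\hat X/X$, so every extension of $\theta$ to $\hat X$ is automatically $\hat Y_\theta$-invariant (and in fact extends to $\hat Y_\theta$, by inflating the twisting character along $\hat Y_\theta/Y_\theta\cong\hat X/X$), and (2) applies without the fallback.
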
 
\begin{proof}
(1) Let $\hat\psi\in\irr(\hat Y)$ be an extension of $\psi$. In particular, note that $\hat\psi\in\irr(\hat Y|\theta)$, and hence by Clifford correspondence, there is a unique $\hat\chi\in\irr(\hat Y_\theta|\theta)$ such that $\hat\psi=\Ind_{\hat Y_\theta}^{\hat Y}(\hat\chi)$. Now, since $Y\cap \hat Y_\theta=Y_\theta$ and $\hat Y=\hat Y_\theta Y$, we have $\psi=\Res_Y^{\hat Y} (\hat\psi)
=\Res_Y^{\hat Y}\Ind_{\hat Y_\theta}^{\hat Y}(\hat\chi)
=\Ind_{Y_\theta}^{Y}\Res_{Y_\theta}^{\hat Y_\theta}(\hat\chi)$ (see e.g. \cite[Problem (5.2)]{isaacs}). This forces $\Res_{Y_\theta}^{\hat Y_\theta}(\hat\chi)$ to be irreducible and $\chi=\Res_{Y_\theta}^{\hat Y_\theta}(\hat\chi)$ by the uniqueness of $\chi$ under Clifford correspondence, since both lie over $\theta$.
Now, identifying $A\cong\hat Y_\theta/Y_\theta\cong \hat Y/Y$, we have by Gallagher's theorem that $\Irr(\hat Y_\theta|\chi)=\{\hat \chi\gamma\mid \gamma\in \Irr(A)\}$ and $\Irr(\hat Y|\psi)=\{\hat \psi\gamma\mid \gamma\in \Irr(A)\}$.  Note that under our identification, we have $\Ind_{\hat Y_\theta}^{\hat Y}(\hat\chi\gamma)=\hat\psi\gamma$, as the identification $\Irr(\hat Y/Y)=\Irr(\hat Y_\theta/Y_\theta)$ is via restriction (see e.g. \cite[Problem (5.3)]{isaacs}).

(2) Let $\hat\theta:=\Res^{\hat Y_\theta}_{\hat X}(\hat \Lambda(\theta))$, where $\hat\Lambda(\theta)$ is an extension of $\theta$ to $\hat Y_\theta$, which exists by assumption.  Then $\hat\theta$ is an extension of $\theta$ to $\hat X$. Again we use Gallagher's theorem to achieve the stated bijection, noting that $\hat Y_\theta/\hat X\cong Y_\theta/X$, with their characters identified via restriction.

(3) Certainly, we have $\gal_{\hat\psi}\cap\gal_{\theta}\subseteq\gal_{\psi}\cap\gal_{\theta}$, and we wish to prove that there exists such a $\hat\psi$ satisfying the converse inclusion. Let $\tau\in\gal_{\psi}\cap\gal_{\theta}$.  Note that the action of $\tau$ commutes with induction, so that letting $\chi$ be as in (1), we have $\chi^\tau=\chi$  by the uniqueness of Clifford correspondence and it suffices by (1) to find an extension $\hat\chi\in\irr(\hat Y_\theta|\chi)$ such that $\hat\chi^\tau=\hat\chi$.  
Now, let $\hat\theta\in\Irr(\hat X)$ be the unique extension of $\theta$ to $\hat X$ such that $A\in \ker(\hat\theta)$.  (Indeed, note that such a character exists since $\theta$ is now assumed to be linear.) Then $\hat\theta^\tau=\hat\theta$, and by (2) there is a unique $\hat\chi\in\irr(\hat Y_\theta|\hat\theta)$ extending $\chi$. But since $\hat\chi^\tau$ also extends $\chi=\chi^\tau$ and lies above $\hat\theta=\hat\theta^\tau$, this forces $\hat\chi^\tau=\hat\chi$.

(4)  We have $\hat\theta$ from the proof of (3) is the unique extension of $\theta$ to $\hat X$ of order $o(\theta)$, by \cite[Corollary (8.16)]{isaacs}, and therefore $\hat\theta^\beta=\hat\theta$ for any $\beta\in \Gamma_\theta$.  
Arguing similarly to part (3) now yields the last statement.
\end{proof}

\begin{remark}
We remark that the conclusion of (4) from Lemma \ref{lem:localextend} would also hold under the assumption that $A$ is $\Gamma$-invariant, but this will not necessarily be the situation for our application.
\end{remark}

Let $\bG$, $F$, and $F_0$ be as in \prettyref{sec:descentscalars} so that $G=\bG^F$ is defined over $\F_q$, 
and let $\bT\leq \bg{B}$ be an $F$-stable maximal torus and Borel subgroup, respectively, in $\bG$ such that $F_0$ centralizes $N/T$, where $T:={\bT}^F$ and $N:=\norm_G(\bT)$.  We will write $\hat T:=T\langle F_0\rangle$, $\hat G:=G\langle F_0\rangle$, and $\hat N:=N\langle F_0\rangle$, as before.

Let $\bG\hookrightarrow\wt{\bG}$ be a regular embedding, as in \cite[(15.1)]{CE04}, and let $\wt{\bT}$ be an $F$-stable maximally split torus of $\wt{\bG}$ such that $\bT=\wt{\bT}\cap\bG$.  Write $\wt{T}:=\wt{\bT}^F$ and $\wt{G}:=\wt{\bG}^F$.  Then $\wt{G}$, and hence $\wt{T}$, induces the so-called diagonal automorphisms on $G$.

For $\delta\in\irr(T)$, the principal series $\mathcal{E}(G, (T, \delta))$ is the Harish-Chandra series of $G$ corresponding to $(T, \delta)$, and is in bijection with the set of irreducible characters of $N_\delta/T$.  We write $\HC_{\bT}^{\bG}(\delta)_{\eta}$ as in \cite[4.D]{MS16} 
for the character in $\mathcal{E}(G, (T, \delta))$ corresponding to $\eta\in\irr(N_{\delta}/T)$.

For finite groups $X\lhd Y$, if every character $\theta\in\irr(X)$ extends to its inertia group $Y_\theta$, then by an extension map with respect to $X\lhd Y$, we mean a map $\Lambda\colon \irr(X)\rightarrow \bigcup_{\theta\in\irr(X)} \irr(Y_\theta)$ such that $\Lambda(\theta)\in \irr(Y_\theta)$ is an extension of $\theta$ for each $\theta\in\irr(X)$.

\begin{lemma}\label{lem:principalextend}
Let $\ell\nmid q$ be a prime and write $\galh:=\galh_\ell$. Let $\tau\in\galh$ and suppose $\delta$ and $\chi\in \mathcal{E}(G, (T, \delta))$ are each invariant under both $F_0$ and $\tau$. Then:
\begin{enumerate}
\item There exists an extension $\hat \chi$ of $\chi$ to $\hat G$ such that $\hat \chi^\tau=\hat \chi$. If further $F_0$ has order prime to $o(\delta)$, then $\hat\chi^{a}=\hat\chi$ for any $a\in\wt{T}_\chi$.
\item If $\Lambda$ is an extension map with respect to $T\lhd N$ such that the map \[\Omega\colon \bigcup_{\delta\in\irr(T)} \mathcal{E}(G, (T, \delta))\rightarrow \irr(N)\] defined by $\HC_{\bT}^{\bG}(\delta)_{\eta}\mapsto \Ind_{N_{\delta}}^{N}(\Lambda(\delta)\eta)$ is $\galh\times \langle F_0\rangle$-equivariant,  
then there also exists an extension $\hat \psi$ of $\psi:=\Omega(\chi)$ to $\hat N$ such that $\hat \psi^\tau=\hat \psi$. 
\end{enumerate}
\end{lemma}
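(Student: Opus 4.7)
The plan is to convert part (1) into a question about extensions of the linear character $\delta$ using the Galois-equivariant bijection of Proposition \ref{prop:uniquedescent} applied to $\bL=\bT$. Assuming $\delta$ extends to $\norm_G(\bT)_\delta\langle F_0\rangle$ (which holds in the applications, via the extension map $\Lambda$ of part (2)), that proposition yields a bijection $\irr(\hat G\mid\chi)\to\irr(\hat T\mid\delta)$ which, by the Galois-equivariance of Harish-Chandra induction, intertwines the Galois action on both sides. To produce $\hat\chi$ with $\hat\chi^\tau=\hat\chi$, it therefore suffices to find a $\tau$-invariant extension $\hat\delta$ of $\delta$ to $\hat T$. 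Applying Lemma \ref{lem:localextend}(3) with $X=Y=T$ and $A=\langle F_0\rangle$ to the linear, $F_0$-invariant $\delta$ does exactly this: the distinguished extension $\hat\delta$ trivial on $\langle F_0\rangle$ satisfies $\gal_\delta\subseteq\gal_{\hat\delta}$, so in particular $\hat\delta^\tau=\hat\delta$, and the corresponding $\hat\chi$ is $\tau$-invariant.

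For the additional $\wt T_\chi$-invariance under the coprimality hypothesis $(|F_0|,o(\delta))=1$, I would keep the same extension $\hat\delta$ and invoke Lemma \ref{lem:localextend}(4) with $\Gamma=\wt T$ (viewed as acting on $\hat T$ via its commuting relationship with $F_0$). Since $\wt T$ centralizes $T$ it fixes $\delta$, so the conclusion of (4) gives $\wt T\subseteq\wt T_{\hat\delta}$; combined with $\wt T_\chi\subseteq\wt T$, the bijection then forces $\hat\chi^a=\hat\chi$ for every $a\in\wt T_\chi$.

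For part (2), my approach is to apply Lemma \ref{lem:localextend}(3) directly on the local side with $Y=N$, $X=T$, $A=\langle F_0\rangle$, $\hat Y=\hat N$, $\theta=\delta$, and $\psi=\Omega(\chi)$. The $\galh\times\langle F_0\rangle$-equivariance of $\Omega$ transports both the $\tau$- and $F_0$-invariance of $\chi$ to $\psi$, and the same equivariance forces $\Lambda(\delta)$ itself to be $F_0$-invariant, so $\delta$ extends from $T$ through $N_\delta$ to $\hat Y_\delta=N_\delta\langle F_0\rangle$. The hypotheses of Lemma \ref{lem:localextend}(3) are then in place, producing an extension $\hat\psi\in\irr(\hat N\mid\psi)$ with $\gal_{\hat\psi}\cap\gal_\delta=\gal_\psi\cap\gal_\delta$; since $\tau\in\gal_\delta\cap\gal_\psi$, this yields $\hat\psi^\tau=\hat\psi$.

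The main technical point I expect to wrestle with is verifying, in the setup of part (1), both the hypothesis of Proposition \ref{prop:uniquedescent} (extendibility of $\delta$ to $\norm_G(\bT)_\delta\langle F_0\rangle$) and that the $\wt T$-action on $\hat G$ is well-defined and compatible with the bijection --- these depend delicately on the interaction between the Frobenius $F_0$ and the diagonal torus $\wt T$ and require the careful bookkeeping developed in the preceding sections.
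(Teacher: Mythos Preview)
Your proposal is correct and follows essentially the same approach as the paper. Both arguments take the trivial extension $\hat\delta$ of $\delta$ to $\hat T$ (with $F_0\in\ker\hat\delta$), use the bijection of Proposition~\ref{prop:uniquedescent}/Corollary~\ref{cor:galhc} to transport $\tau$-invariance from $\hat\delta$ to $\hat\chi$, and for the $\wt T_\chi$-invariance use that $\hat\delta$ is the unique extension of order $o(\delta)$ (the paper cites \cite[Corollary~(8.16)]{isaacs} directly, while you package this via Lemma~\ref{lem:localextend}(4)); part~(2) is identical in both, applying Lemma~\ref{lem:localextend}(3) with $(X,Y,\theta)=(T,N,\delta)$. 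The technical concern you flag about extendibility of $\delta$ to $N_\delta\langle F_0\rangle$ is indeed a standing hypothesis in the paper's setup and is handled exactly as you anticipate.
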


\begin{proof}
Let $\hat \delta\in\irr(\hat T)$ be the extension of $\delta$ to $\hat T$ such that $F_0\in\ker(\hat \delta)$.  
Then in the notation of \prettyref{cor:galhc}, we have $\delta^{\tau x}=\delta=\delta^\tau$, so $x\in\norm_G(T)$ may be chosen to be trivial.  Then $\hat \delta^{\tau x}=\hat \delta^\tau=\hat \delta$ by our choice of $\hat \delta$.  That is, $\lambda=1$ in the notation of \prettyref{cor:galhc}.  Now, let $\hat \chi\in \irr(\hat G|\chi)$  correspond to $\hat \delta$ as in \prettyref{prop:uniquedescent}.  Then \prettyref{cor:galhc} yields $\hat \chi^\tau=\hat \chi$, completing the proof of the first statement of (1).  If $F_0$ has order prime to $o(\delta)$ and $a\in \wt{T}_\chi$, we have $\hat\delta^a=\hat\delta$ since it is the unique extension of $\delta$ to $\hat T$ of order $o(\delta)$, by \cite[Corollary (8.16)]{isaacs}, and $\delta$ is necessarily $\wt{T}$-invariant.  Then $\langle \HC_{\hat T}^{\hat G} \hat \delta, \hat \chi^a \rangle \neq 0$, forcing $\hat\chi^a=\hat\chi$ by Proposition \ref{prop:uniquedescent}. 

Now, in the situation of (2), we have $\psi$ is $\tau$- and $F_0$-invariant by the equivariance of $\Omega$.  Hence the statement follows from Lemma \ref{lem:localextend} applied to $(X,Y,\theta)=(T, N, \delta)$.
\end{proof}

\section{Extensions and the Inductive McKay--Navarro Conditions}\label{sec:iMNconds}


Let $S$ be a nonabelian finite simple group and $\ell$ a prime dividing $|S|$.  Let $H$ be a universal covering group of $S$ and $P\in\Syl_\ell(H)$.  Write $\galh:=\galh_\ell$.
The inductive McKay--Navarro conditions  \cite[Definition 3.1]{NSV} (referred to there as inductive Galois--McKay conditions) require an $\aut(H)_P$-stable subgroup $\norm_H(P)\leq M<H$ and an $\galh\times \aut(H)_P$-equivariant bijection $\Omega\colon \irr_{\ell'}(H)\rightarrow \irr_{p'}(M)$ such that, roughly speaking, 
the projective representations extending any $\chi\in\irr_{\ell'}(H)$ to its inertia subgroup in $H\rtimes \aut(H)_{P, \chi^{\galh}}$ and $\Omega(\chi)$ to its inertia subgroup in $M\rtimes \aut(H)_{P, \chi^{\galh}}$ behave similarly when twisted by an element of $(M\rtimes \aut(H)_{P, \chi^{\galh}}\times \galh)_\chi$.  
(See \cite[Definitions 1.5 and 3.1]{NSV} for the precise definition.)  Here $\chi^\galh$ is the $\galh$-orbit of $\chi$.

In our situation of Theorem \ref{thm:iMN}, the second author has proven in \cite{SF20} that the groups $M$ and bijections $\Omega$ from \cite{malle07, malle08, MS16} for the inductive McKay conditions are indeed $\galh$-equivariant.  Hence here we must study the behavior of the projective representations under the appropriate twists.
We recall the statement of \cite[Lemma 1.4]{NSV}, which makes this idea more precise:

\begin{lemma}\label{function}
	Let $Y$ be a finite group and $X \lhd Y$. Let $\theta \in \Irr (X)$ and assume that
	$\theta^{g\tau }=\theta$ for some $g\in Y$ and
	$\tau \in \gal$.
	Let $\mathcal{P}$ be a projective representation of $Y_\theta$
	associated with $\theta$ with values in $\mathbb{Q}^{\mathrm{ab}}$ and factor set $\alpha$.
	Then $\mathcal{P}^{g\tau}$ is a projective representation
	associated with $\theta$, with factor set $\alpha^{g\tau}(x, y)=\alpha^g(x,y)^\tau$ for $x,y \in Y_\theta$.
	In particular, there exists a unique function $$\mu_{g\tau} : Y_\theta \to K^\times$$
	with $\mu_{g\tau}(1)=1$, constant on cosets of $X$
	such that the projective representation $\mathcal{P}^{g\tau}$ is similar to $\mu_{g\tau} \mathcal{P}$.
\end{lemma}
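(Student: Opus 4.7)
The plan is to interpret the statement as two assertions: first, that $\mathcal{P}^{g\tau}$ is a well-defined projective representation of $Y_\theta$ whose restriction to $X$ affords $\theta$; and second, that any two such projective representations differ by a scalar function $\mu$ that is constant on cosets of $X$. For the first assertion, I would begin by checking that $g$ normalizes $Y_\theta$. Since the $Y$-conjugation action on $\Irr(X)$ commutes with the action of $\gal$, we have $Y_{\theta^{g\tau}} = Y_{(\theta^g)^\tau} = Y_{\theta^g} = g^{-1}Y_\theta g$; the hypothesis $\theta^{g\tau} = \theta$ therefore forces $g^{-1}Y_\theta g = Y_\theta$. Hence $\mathcal{P}^g \colon y \mapsto \mathcal{P}(gyg^{-1})$ is a well-defined map on $Y_\theta$, and a direct verification from the cocycle identity for $\mathcal{P}$ shows it is a projective representation with factor set $\alpha^g(x,y) = \alpha(gxg^{-1}, gyg^{-1})$ whose restriction to $X$ affords $\theta^g$. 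Since $\tau$ is a field automorphism of $\mathbb{Q}^{\mathrm{ab}}$, entrywise application of $\tau$ is a ring endomorphism on matrices over $\mathbb{Q}^{\mathrm{ab}}$ and so preserves all multiplicative relations; thus $\mathcal{P}^{g\tau}$ is a projective representation of $Y_\theta$ with factor set $\alpha^{g\tau}(x,y) = \alpha^g(x,y)^\tau$, and its restriction to $X$ affords $(\theta^g)^\tau = \theta$.

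For the second assertion, both $\mathcal{P}$ and $\mathcal{P}^{g\tau}$ restrict to $X$ as ordinary representations affording the same irreducible $\theta$, so by Schur's lemma there is an invertible matrix $T$ with $T\mathcal{P}^{g\tau}(x)T^{-1} = \mathcal{P}(x)$ for all $x \in X$. Replacing $\mathcal{P}^{g\tau}$ by the similar projective representation $T\mathcal{P}^{g\tau}(\cdot)T^{-1}$, I may assume the two agree on $X$. For each $y \in Y_\theta$, the matrix $\mathcal{P}^{g\tau}(y)\mathcal{P}(y)^{-1}$ then commutes with the image of the irreducible representation $\theta$, so it must be a scalar $\mu_{g\tau}(y)$ by Schur's lemma. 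Normalizing to $\mu_{g\tau}(1) = 1$ pins down $\mu_{g\tau}$ uniquely, and a direct computation using the cocycle identities together with the fact that both $\mathcal{P}$ and $\mathcal{P}^{g\tau}$ are ordinary on $X$ (so their cocycles are trivial there) shows $\mu_{g\tau}(yx) = \mu_{g\tau}(y)$ for $x \in X$, i.e., $\mu_{g\tau}$ is constant on cosets of $X$.

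I expect the main conceptual input to be the identification $Y_{\theta^{g\tau}} = g^{-1}Y_\theta g$, which is precisely what makes $\mathcal{P}^{g\tau}$ defined on the original group $Y_\theta$; once this is in place, the remainder is a routine application of Schur's lemma entirely parallel to the standard non-Galois result on extensions of projective representations, as treated for instance in Isaacs' Chapter 11.
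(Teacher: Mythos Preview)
Your argument is correct and complete in all essentials. The paper itself does not supply a proof of this lemma: it is introduced with ``We recall the statement of \cite[Lemma 1.4]{NSV}'' and is simply quoted from Navarro--Sp\"ath--Vallejo without further justification. What you have written is therefore a self-contained substitute for that citation rather than a parallel to anything in the paper.

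A couple of minor remarks on presentation. The normalization $\mu_{g\tau}(1)=1$ is automatic once you have conjugated so that $\mathcal{P}^{g\tau}$ and $\mathcal{P}$ agree on $X$, since $\mathcal{P}(1)=\mathcal{P}^{g\tau}(1)=I$; you do not need to impose it separately. For uniqueness, you might make explicit that any two choices of the intertwining matrix $T$ differ by a scalar (again Schur, since both conjugate $\mathcal{P}^{g\tau}|_X$ to the irreducible $\mathcal{P}|_X$), so the resulting scalar function $\mu_{g\tau}$ is independent of this choice. Finally, the ``direct computation'' showing $\mu_{g\tau}$ is constant on $X$-cosets rests on the fact that a projective representation \emph{associated with} $\theta$ has its factor set normalized to satisfy $\alpha(x,y)=\alpha(y,x)=1$ whenever $x\in X$; this is implicit in the definition but worth stating, since it is exactly what makes $\mathcal{P}(yx)=\mathcal{P}(y)\mathcal{P}(x)$ and the analogous identity for $\mathcal{P}^{g\tau}$ hold.
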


Given the main results of \cite{MS16, SF20}, our primary remaining obstruction to proving Theorem \ref{thm:iMN} is in working with the characters $\mu_{g\tau}$ as in Lemma \ref{function}.   If $\theta$ extends to $Y_\theta$, then $\mu_{g\tau}$ is just the linear character guaranteed by Gallagher's theorem \cite[Corollary 6.17]{isaacs}.  In this situation, our main task will be to  prove part (iv) of \cite[Definition 1.5]{NSV} for certain triples, which, roughly speaking, will require that the characters $\mu_{g\tau}$ for $g\tau\in (M\rtimes \aut(H)_{P, \chi^{\galh}}\times \galh)_\chi$  are the same on either side of the bijection $\Omega$ when $\chi$ and $\Omega(\chi)$ (or their corresponding projective representations) are extended to their inertia groups in $M\rtimes\aut(H)_{P, \chi^\galh}$ and $H\rtimes \aut(H)_{P, \chi^\galh}$, respectively.
We will assume throughout, without loss of generality, that all linear representations are realized over the field $\mathbb{Q}^{\mathrm{ab}}$. 

\subsection{Further Notation}\label{sec:notn}

Keep the notation for $T=\bT^F$, $G=\bG^F$, and $N$, from Section \ref{sec:principalseries} and further assume that $\bG$ is simple of simply connected type.  If $\zen(G)=1$, write $\wt{\bG}=\bG$, $\wt{G}=G$, $\wt{\bT}=\bT$, and $\wt{T}:=T$, and if $\zen(G)\neq 1$, let $\bG \hookrightarrow \wt{\bG}$ be a regular embedding and let  $\wt{G}:={\wt{\bG}}^F$ and $\wt{T}:=\wt{\bT}^F$ as before.  Note that for an appropriate group $D$ generated by field and graph automorphisms, we have $\wt{G}\rtimes D$ induces $\aut(G)$. 

Let $d:=d_\ell(q)$ be the order of $q$ modulo $\ell$ for $\ell$ odd, or the order of $q$ modulo 4 for $\ell=2$, and suppose that $d\in\{1, 2\}$.  Let $\bS_0$ be a Sylow $d$-torus for $(\bG, F)$ and write $N_0:=\norm_{G}(\bS_0)$ and $\wt{N}_0:=\norm_{\wt{G}}(\bS_0)$.


\subsection{The Characters $\mu_{g\tau}$ in Our Situation}
Let $\wt{X} \in \{ \wt{G}, \wt{N}_0 \}$ and $X:= G \cap \tilde{X}\in\{G, N_0\}$. Suppose that $\psi\in\irr(X)$ is a character such that $(\wt{X} D)_\psi=\wt{X}_\psi D_\psi$ and such that $\psi$ extends to ${X} D_\psi$. Note that since restrictions of irreducible characters from $\wt{G}$ to $G$ are multiplicity-free by the work of Lusztig and the same is true for $\wt{N}_0$ to $N_0$ by \cite[Corollary 3.21]{MS16}, 
 we also have $\psi$ extends to $\wt{X}_\psi$.  


Let $\mathcal{D}$ be a representation affording $\psi$ and let $\mathcal{D}_1$, respectively $\mathcal{D}_2$, be the representation of $\wt{X}_{\psi}$, respectively $X D_\psi$, extending $\mathcal{D}$. 
For $i=1,2$ we let $\psi_i$ be the character of the representation $\mathcal{D}_i$. Then we define $\mathcal{P}$ to be the projective representation of $(\wt{X}D)_{\psi}$ above $\psi$ given by 
$$\mathcal{P}(\wt{g} d):= \mathcal{D}_1(\wt{g}) \mathcal{D}_2(d)$$
for $\wt{g} \in \wt{X}_{\psi}$ and $d \in X D_{\psi}$, as in \cite[Lemma 2.11]{spath12}.

 In this situation, we can now state the following lemma, whose proof is exactly the same as \cite[Lemma 7.2]{ruhstorfer}.

\begin{lemma}\label{compute mu}
Assume we are in the situation above.	Let $y\in \galh \times \norm_{\wt{N}_0D}(X D_{\psi} )$ with $ \psi^y= \psi$. Suppose that $\mu_1 \in \Irr( \tilde{X}_{\psi} /X)$ and $\mu_2 \in \Irr( X D_{\psi} / X)$ are such that $ \psi_i^y = \mu_i \psi_i$ for $i=1,2$. Then there exists an invertible matrix $M$ such that
	$$ \mathcal{P}^y( \tilde{g} d)= \mu_1(\tilde{g}) \mu_2(d) M \mathcal{P}(\tilde{g} d) M^{-1}$$ for all $\tilde{g} \in \tilde{X}_{\psi}$ and $d \in X D_{\psi}$.
\end{lemma}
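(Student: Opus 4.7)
The plan is to follow the strategy of \cite[Lemma 7.2]{ruhstorfer}: apply $y$ separately to the two factors $\mathcal{D}_1$ and $\mathcal{D}_2$ in the definition $\mathcal{P}(\tilde{g}d)=\mathcal{D}_1(\tilde{g})\mathcal{D}_2(d)$, produce an intertwining matrix for each via Schur's lemma and the hypothesis on the $\psi_i^y$, and then align these two matrices into a single $M$ by restricting to $X$.

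First I would observe that, by the hypothesis $y\in\gal\times\norm_{\wt{N}_0D}(XD_\psi)$ and $\psi^y=\psi$, the element $y$ normalizes the subgroups $\tilde{X}_\psi$ and $XD_\psi$ of $(\wt{X}D)_\psi$ (using that $X\in\{G,N_0\}$ is normal in $\wt{N}_0D$ and that $\wt{X}_\psi$ is the full inertia subgroup in $\wt{X}$). Hence the $y$-action respects the product decomposition used to define $\mathcal{P}$, giving
$$\mathcal{P}^y(\tilde{g}d)=\mathcal{D}_1^y(\tilde{g})\,\mathcal{D}_2^y(d)$$
for $\tilde{g}\in\wt{X}_\psi$, $d\in XD_\psi$.

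Next, because $\mathcal{D}_i$ is a genuine (not merely projective) representation whose character satisfies $\psi_i^y=\mu_i\psi_i$, the representation $\mathcal{D}_i^y$ is equivalent to $\mu_i\mathcal{D}_i$. So for $i=1,2$ there exists an invertible matrix $M_i$ with
$$\mathcal{D}_i^y(g)=\mu_i(g)\,M_i\mathcal{D}_i(g)M_i^{-1}.$$
Restricting to $X$, where $\mu_1=\mu_2=1$ (since $\mu_i\in\Irr(\cdot/X)$) and where $\mathcal{D}_1,\mathcal{D}_2$ both agree with the irreducible representation $\mathcal{D}$, I obtain $M_1\mathcal{D}(x)M_1^{-1}=M_2\mathcal{D}(x)M_2^{-1}$ for every $x\in X$. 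Schur's lemma then forces $M_2^{-1}M_1$ to be a scalar, and after rescaling $M_2$ I can take $M:=M_1=M_2$.

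Substituting into the previous display,
$$\mathcal{P}^y(\tilde{g}d)=\mu_1(\tilde{g})\mu_2(d)\,M\mathcal{D}_1(\tilde{g})M^{-1}M\mathcal{D}_2(d)M^{-1}=\mu_1(\tilde{g})\mu_2(d)\,M\mathcal{P}(\tilde{g}d)M^{-1},$$
which is the claimed formula. The only real subtlety is verifying the compatibility of the $y$-action with the product structure in the first step; once that is in place, everything else is a formal consequence of Schur's lemma applied to the irreducible representation $\mathcal{D}$ of $X$.
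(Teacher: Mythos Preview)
Your proof is correct and follows exactly the approach indicated in the paper, which simply states that the argument is the same as \cite[Lemma 7.2]{ruhstorfer}. The decomposition $\mathcal{P}^y(\tilde g d)=\mathcal{D}_1^y(\tilde g)\mathcal{D}_2^y(d)$ via the normalizing hypothesis, the production of intertwiners $M_1,M_2$ from $\psi_i^y=\mu_i\psi_i$, and the Schur-lemma alignment on $X$ are precisely the steps of that lemma.
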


%

\section{Proof of \prettyref{thm:iMN} for Symplectic Groups}\label{sec:typeC} 

We fix some notation to be used throughout our proof of \prettyref{thm:iMN} in the case of the symplectic groups.  Let $q$ be a power of an odd prime $p$ and let $S:=\PSp_{2n}(q)$ with $n\geq 1$ be one of the simple symplectic groups.   Let $G:=\Sp_{2n}(q)$ and $\wt{G}:=\operatorname{CSp}_{2n}(q)$, so that $G$ is a universal covering group for $S$ unless $(n,q)=(1,9)$, and $\wt{G}\rtimes D$ with $D=\langle F_p\rangle$ induces all automorphisms on $G$.  Here $F_p$ is the field automorphism induced by the map $x\mapsto x^p$ in $\F_q$.  

Note that for $n=1$, we have $S\cong\PSL_2(q)$, $G\cong \SL_2(q)$, and $\wt{G}\cong \GL_2(q)$.  We begin with a lemma to help in this case.  


{
\begin{lemma}\label{lem:SL2}
	Let $G=\SL_2(q)$	and $\tilde{\chi} \in \Irr(\tilde{G})$.  Then there exists a character $\chi \in \Irr(\tilde{G} \mid \tilde{\chi})$ which has an extension $\hat{\chi}$ to $G D_\chi$ such that $\tilde{G} (\tilde{G}\mathcal{G})_\chi=\tilde{G} (\tilde{G}\mathcal{G})_{\hat{\chi}}$. 
\end{lemma}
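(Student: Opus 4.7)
The plan is to reformulate the condition and then proceed by Clifford-theoretic case analysis on the restriction $\Res^{\tilde G}_G\tilde\chi$, exploiting the freedom Gallagher's theorem provides in choosing both the constituent $\chi$ and its extension.

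Since $\hat\chi$ restricts to $\chi$, the inclusion $\tilde G(\tilde G\mathcal{G})_{\hat\chi}\subseteq \tilde G(\tilde G\mathcal{G})_{\chi}$ is automatic, so the claim reduces to showing that whenever $\tau\in\mathcal{G}$ satisfies $\chi^\tau\in\chi^{\tilde G}$ one can arrange $\hat\chi^\tau\in\hat\chi^{\tilde G}$. Since $q$ is odd, $[\tilde G:G\,\zen(\tilde G)]=2$, so $\Res^{\tilde G}_G\tilde\chi$ is either irreducible or splits as $\chi+\chi^t$ with $\chi^t$ a $\tilde G$-conjugate; in either case $\chi$ extends to $GD_\chi$ because $GD_\chi/G\cong D_\chi$ is cyclic and $\chi$ is $D_\chi$-invariant by construction, and the set of such extensions is a torsor under $\Irr(D_\chi)$.

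The argument then splits into two cases. When $\Res^{\tilde G}_G\tilde\chi$ is irreducible, I would take $\chi:=\Res^{\tilde G}_G\tilde\chi$, so that the condition amounts to finding an extension $\hat\chi$ fixed, up to $\tilde G$-conjugacy, by each $\tau$ with $\chi^\tau=\chi$. For principal series characters this is delivered essentially by Lemma \ref{lem:principalextend}, while for the few cuspidal characters of $\SL_2(q)$ a parallel argument via the dual Deligne--Lusztig construction from the nonsplit torus applies. When $\Res^{\tilde G}_G\tilde\chi=\chi+\chi^t$ splits, I would choose one constituent and then use the Gallagher parametrization, together with the option of swapping $\chi\leftrightarrow\chi^t$, to arrange the twist condition.

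The main obstacle will be the splitting case, which corresponds to the exceptional characters of $\SL_2(q)$ of degrees $(q\pm 1)/2$. There the Galois action on $\chi$ and $\chi^t$ typically involves Gauss sums such as $\sqrt{\pm q}$, and one must verify that the potential obstruction to $\hat\chi^\tau\in\hat\chi^{\tilde G}$, encoded by a cocycle with values in $\Irr(D_\chi)$ measuring the discrepancy $\hat\chi^\tau$ versus $\hat\chi^{g_\tau}$ for an appropriate $g_\tau\in\tilde G$, can be trivialized by rescaling $\hat\chi$ and/or interchanging the two constituents. Since $\SL_2(q)$ has a fully known character table and the Galois action on its irreducible characters is classical, I expect this to reduce to a finite explicit verification, which should be the technical core of the argument.
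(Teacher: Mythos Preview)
Your reformulation is correct and your case split is reasonable, but the route you propose is genuinely different from the paper's and considerably more laborious. The paper does \emph{not} split according to whether $\Res^{\tilde G}_G\tilde\chi$ is irreducible. Instead it treats the two unipotent characters via Corollary~\ref{cor:galhc} (principal series) and then handles \emph{all} non-unipotent characters at once using Gelfand--Graev characters: since for $s\neq 1$ the centralizer $\cen^\circ_{G^\ast}(s)$ is a torus, every such $\chi$ is regular-semisimple and hence occurs with multiplicity one in some Gelfand--Graev character $\Gamma=\Ind_U^G(\phi_0)$ with $\phi_0$ chosen $F_0$-stable. Extending $\phi_0$ trivially and inducing yields a canonical extension $\hat\chi$. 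The key step is that for any $\tau$ stabilizing $\mathcal E(G,s)$ one can find $\tilde t\in\tilde T^{F_0}$ with $\phi_0^{\tilde t\tau}=\phi_0$; since $\tilde t$ is $F_0$-fixed, the extended Gelfand--Graev character $\hat\Gamma$ is $\tilde t\tau$-stable, and multiplicity one forces $\hat\chi^{\tilde t\tau}=\hat\chi$. This single mechanism covers the principal-series, cuspidal, and exceptional characters simultaneously, with no Gauss-sum computation.

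Your approach would work in principle, but two of your steps are underspecified. First, for the $\tilde G$-stable cuspidal characters (degree $q-1$) you invoke ``a parallel argument via the dual Deligne--Lusztig construction from the nonsplit torus''; the paper's Section~\ref{sec:HCdiscon} machinery is Harish--Chandra-theoretic and does not immediately transfer to Deligne--Lusztig induction from a twisted torus, so this would require separate justification. Second, for the exceptional characters of degree $(q\pm1)/2$ you defer to a ``finite explicit verification'' with Gauss sums; this is certainly feasible for $\SL_2$, but the paper's Gelfand--Graev argument obviates it entirely. Also note a small technical point: Lemma~\ref{lem:principalextend} as stated needs $\delta^\tau=\delta$, whereas $\chi^\tau=\chi$ for a principal-series character of $\SL_2(q)$ only gives $\delta^\tau\in\{\delta,\delta^{-1}\}$; you would need to invoke Corollary~\ref{cor:galhc} or Lemma~\ref{lem: ext} rather than Lemma~\ref{lem:principalextend} directly.
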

}
\begin{proof}
	{
	There exists a field automorphism $F_0$ such that $D_\chi=\langle F_0 \rangle$.
	The statement is true for the unipotent characters by Corollary \ref{cor:galhc} as both of them lie in the principal series. So we may assume $\chi$ lies in a rational Lusztig series  $\mathcal{E}(G,s)$ for some semisimple element $1 \neq s$ of the dual group $G^\ast$. It follows that $\cen^\circ_{G^\ast}(s)$ is a maximal torus. In particular, $\chi$ is a regular-semisimple character. As such a character, it is a constituent of a Gelfand--Graev character. Let $\phi_0 \in \Irr(U)$ be a non-trivial $F_0$-stable character, where $U$ is the subgroup of upper unitriangular matrices in $G$. It follows that some $\tilde{G}$-conjugate of $\chi$ is a constituent of multiplicity one of the Gelfand--Graev character $\Gamma:=\Ind_U^G(\phi_0)$. As $(2,p-1)=(2,q-1)$ all diagonal automorphisms are induced by elements of $\tilde{T}^{F_0}$. Therefore, we can assume by possibly replacing $\chi$ by its $\tilde{G}$-conjugate character that $\chi$ is a constituent of $\Gamma$.
	
 Let $\hat{\phi}_0 \in \Irr(U \langle F_0 \rangle)$ be the trivial extension of $\phi_0$ and set $\hat{\Gamma}:=\Ind_{U \langle F_0 \rangle}^{G \langle F_0 \rangle}(\hat{\phi}_0)$. By Mackey's formula $\Res_{G}^{G \langle F_0 \rangle }(\hat{\Gamma})=\Gamma$ and $\Gamma$ is multiplicity free. By Frobenius reciprocity, there exists a constituent $\hat{\chi}$ of $\hat{\Gamma}$ of multiplicity one, which is an extension of $\chi$. The group $\tilde{G}$ stabilizes $\mathcal{E}(G,s)$ and acts transitively on it. Hence, $\tilde{G} (\tilde{G} \mathcal{G})_\chi=\tilde{G} \mathcal{G}_{\mathcal{E}(G,s)}$. Let $\tau \in \gal$ be a Galois automorphism stabilizing the set $\mathcal{E}(G,s)$. Then we find $\tilde{t} \in \tilde{T}^{F_0}$ such that $\phi_0^{\tilde{t} \tau}= \phi_0$, see e.g. \cite[Lemma 6.3]{ruhstorfer} or by direct calculation. Thus, $\chi^{\tilde{t} \tau} \in \mathcal{E}(G,s)$ and $\chi \in \mathcal{E}(G,s)$ are both constituents of $\Gamma$. From this we deduce that $\chi^{\tilde{t} \tau}=\chi$. Moreover, as $\tilde{t}$ is $F_0$-stable the character $\hat{\phi}_0$, and therefore also $\hat{\Gamma}$, is $\tilde{t} \tau$-stable. It follows that $\hat{\chi}$ is $\tilde{t} \tau$-stable as well. Thus,  $\tilde{G} (\tilde{G} \mathcal{G})_{\hat\chi}=\tilde{G} \mathcal{G}_{\mathcal{E}(G,s)}$.
	}
\end{proof}

Now, letting $P\in\Syl_2(G)$, there is an appropriate $\aut(G)_P$-stable subgroup $M$ satisfying $\norm_{G}(P)\leq M<G$, defined as in \cite[Theorem 15.3]{IsaacsMalleNavarroMcKayreduction} if $n=1$, \cite[Theorem 7.8]{malle07} if $q\equiv 1\pmod 8$ with $n\geq 2$, and as in  \cite[Section 4.4]{malle08} otherwise. Throughout the next proof, we let $M$ be this group.

\begin{proof}[Proof of \prettyref{thm:iMN} for $S=\PSp_{2n}(q)$]
Keep the notation from before. Let $P\in\syl_2(G)$ and write $\galh:=\galh_2$.  Then for $n \geq 2$, by \cite[Theorem 6.2 and Proposition 7.3]{SF20}, there is an $(\aut(G)_P\times \galh)$-equivariant bijection 
\[\Omega\colon \irr_{2'}(G)\rightarrow \irr_{2'}(M).\]  

When $n=1$, note that the assumption $S$ is simple implies $q\geq 5$.  Further, the case $S=\PSL_2(9)\cong \alt_6\cong \type{B}_2(2)'$ has been handled in \cite[Proposition 5.3]{johansson}.  Hence we may assume $G$ is the universal covering group for $S$. The odd-degree characters of $G$ are the two unipotent characters $1_G, \mathrm{St}_G,$ and the two characters of degree $\frac{1}{2}(q+\epsilon)$, where $q\equiv\epsilon\mod 4$ with $\epsilon\in\{\pm1\}$. The two unipotent characters are rational-valued, and the remaining two odd-degree characters are stabilized by $\galh$ when $q\equiv \pm1\pmod 8$ and are fixed by $\tau\in\galh$ if and only if $\tau$ fixes $\sqrt{\epsilon p}$ when $q\equiv \pm3\pmod 8$.   When $q\equiv \pm1\pmod 8$, the group $M$ is the normalizer $N_0$ of a Sylow $d_2(q)$-torus, which is of the form $C_{q-\epsilon}.2$ with the $.2$ acting via inversion and induced by an element $c$ such that $c^2$ has order $2$. Here $C_{q-\epsilon}.2/\langle c^2\rangle$ is a semidirect product $C_{q-\epsilon}\rtimes C_2$.
 In this case, the odd-degree characters of $M$ are the extensions to $M$ of the trivial character and the unique character of order $2$ of $C_{q-\epsilon}$, both of which are trivial on $c^2$.  It follows that the members of $\irr_{2'}(M)$ here are then  rational-valued.  When $q\equiv \pm3\pmod 8$, the group $M$ is isomorphic to $\SL_2(3)$, so the set $\irr_{2'}(M)$ is analogous to $\irr_{2'}(G)$, comprised of $\{1_M, \mathrm{St}_M\}$ and two characters of degree $1$ whose behavior under $\galh$ is the same as the non-unipotent characters in $\irr_{2'}(G)$.   From this, we can see that the $\aut(G)_P$-equivariant bijection $\Omega\colon\irr_{2'}(G)\rightarrow\irr_{2'}(M)$ in \cite[Theorem 15.3]{IsaacsMalleNavarroMcKayreduction} is also $\galh$-equivariant. 

 In all cases, it therefore suffices to show that 
\[(G\rtimes \aut(G)_{P, \chi^{\galh}}, G, \chi)_{\galh} \geqslant_c (M\rtimes \aut(G)_{P, \chi^{\galh}}, M, \Omega(\chi))_{\galh}\] for all $\chi\in\irr_{2'}(G)$, in the notation of \cite[Definition 1.5]{NSV}.  Let $\wt{M}:=M \norm_{\wt{G}}(P)$.  Since $\wt{G} D$ induces all automorphisms of $G$, and $M$ can be chosen to be $D$-stable, \cite[Theorem 2.9]{NSV} further implies that it suffices to show
\[((\wt{G} D)_{ \chi^{\galh}}, G, \chi)_{\galh} \geqslant_c ((\wt{M}D)_{ \chi^{\galh}}, M, \Omega(\chi))_{\galh}\] for all $\chi\in\irr_{2'}(G)$.  
We remark that the groups involved satisfy part (i) of \cite[Definition 1.5]{NSV} and that since $\Omega$ is $\wt{M}D\times\galh$-equivariant, part (ii) is also satisfied.

	
	Let $\chi\in\irr_{2'}(G)$. By \cite[Theorem 2.5 and Proposition 4.9]{malle08}, we have $(\wt{G} D)_\chi=\wt{G} D$ if $\chi$ is a unipotent character and $(\wt{G}  D)_{ \chi} ={G} Z(\wt{G}) \rtimes D_\chi$ otherwise.

 Note that since $Z(G)$ is a $2$-group, $\chi$ is trivial on $Z(G)$ (since $\chi$ has odd degree and $G$ is perfect), so through deflation and inflation, we may view $\chi$ as a character of $G Z(\wt{G})$ trivial on $Z(\wt{G})$.  Throughout, we will sometimes make this identification. In particular, since the maps $\Omega$ are constructed so that $\chi$ and $\Omega(\chi)$ lie over the same character of $Z(G)$, their extensions to $(\wt{G}D)_\chi$ and $(\wt{M}D)_\chi$, which exist by \cite[Proposition 4.9]{malle08} (see also the proofs of \cite[Theorems 4.10 and 4.11]{malle08}), 
 may be taken to lie over the same (namely, trivial) character of $Z(\wt{G})$, giving part (iii) of \cite[Definition 1.5]{NSV}.  Hence, it suffices to find extensions of $\chi$ and $\Omega(\chi)$ to   $(\wt{G} D)_\chi$ and $(\wt{M} D)_\chi$, respectively, satisfying part (iv) of \cite[Definition 1.5]{NSV}.  We also note that by \cite[Proposition 4.5]{malle08}, $\chi$ lies in a rational Lusztig series $\mathcal{E}(G, s)$ with $s^2=1$. 

\medskip

(0) Let $n=1$, so $G=\SL_2(q)$.   Let $\tau\in \galh$ and let $\chi\in\irr_{2'}(G)$.  Note that $\chi$ is either $\tau$-invariant or $\chi^\tau=\chi^{\tilde{t}}$ for $\tilde{t}$ inducing the nontrivial diagonal automorphism in $\out(G)$.  First suppose that $q\equiv \pm1\pmod 8$.  Then in the proof of Lemma \ref{lem:SL2}, we may take $\wt{t}=1$, since all members of $\irr_{2'}(G)$ are $\galh$-invariant (see the discussion in the second paragraph of the proof). Hence, there is a $\tau$-invariant extension $\hat\chi$ to $G\langle F_0\rangle$. Recalling that the odd-degree characters of $M$ are linear and rational-valued here, we may extend them trivially to $M\langle F_0\rangle$ as well, to obtain the desired extensions when $\chi$ is not unipotent.  When $\chi\in\{1_G, \mathrm{St}_G\},$ there is a rational extension to $\wt{G}D$. Here $\Omega(\chi)$ is trivial on $C_{q-\epsilon}$ and we have $\wt{M}=\wt{C}.2$, where $\wt{C}$ is an abelian group containing $C_{q-\epsilon}$.  Then taking the two extensions of the trivial character of $\wt{C}$, we may argue as before to get extensions of $\Omega(\chi)$ to $\wt{M}\langle F_0\rangle$.  

Now let $q\equiv \pm3\pmod 8$.  Then since $M\cong \SL_2(3)$, the required extensions exists if $\chi$ (and hence $\Omega(\chi)$) are unipotent, as before.  So assume that $\chi$ is one of the non-unipotent members of $\irr_{2'}(G)$.  Since $F_0$ centralizes $M$, $\Omega(\chi)$ extends to a character of $M\langle F_0\rangle$ with the same field of values as $\Omega(\chi)$.  Let $\tilde{t}$ be as in the proof of Lemma \ref{lem:SL2}.  Recall that $\chi$ is $\tau$-invariant if and only if $\Omega(\chi)$ is, in which case we may take $\tilde{t}=1$.  If $\chi^\tau\neq \chi$, we have $\chi^{\tilde{t}\tau}=\chi$ with $\tilde{t}$ inducing the nontrivial diagonal automorphism, and the same is true for $\Omega(\chi)$.  (Note that such a $\tilde{t}$ also induces the nontrivial diagonal automorphism on $M\cong \SL_2(3)$.)  By Lemma \ref{lem:SL2}, we have an extension $\hat\chi$ to $G\langle F_0\rangle$ that also satisfies $\hat\chi^{\tilde{t}\tau}=\hat\chi$.  Since the same holds for $\Omega(\chi)$, we have obtained the desired extensions.  

\medskip

(1) Now, suppose that $n\geq 2$ and $q\equiv 1\pmod 8$, and let $\chi\in\irr_{2'}(G)$.  Then by \cite[Lemma 4.10 and Theorem B]{SFT20}, $\Q(\chi)=\Q$ and hence $\chi^{\galh}=\chi$, so we aim to show  
\[((\wt{G} D)_{ \chi}, G, \chi)_{\galh} \geqslant_c ((\wt{M}D)_{ \chi}, M, \Omega(\chi))_{\galh}\]
in this case.  Further, by \cite[Lemma 7.5 and Theorem 7.7]{MS16}, $\chi$ lies in a principal series $\mathcal{E}(G, (T,\delta))$ with $\delta^2=1$.    Note that $\delta$ is $D\times \galh$-invariant, so $\galh=\galh_{\chi}=\galh_{\delta}$.   In this case, the group $M$ is $\norm_{G}(\bT)$, where $\bT$ is a maximally split torus in $\bG=\Sp_{2n}(\overline{\F}_q)$, and the map $\Omega$  is induced by one as in  \prettyref{lem:principalextend}(2).   

Now, \prettyref{lem:principalextend} yields an extension $\hat\chi$ of $\chi$ to $G\langle F_0\rangle$ and $\hat\psi$ of $\Omega(\chi)$ to $M\langle F_0\rangle$ that are each $\galh$-invariant.  If $\chi$ is not unipotent, this implies that there are $\galh$-invariant extensions to $(\wt{G} D)_\chi=G Z(\wt{G})\rtimes D_\chi$ and $(\wt{M}D)_\chi=MZ(\wt{G})\rtimes D_{\chi}$, giving part (iv) of \cite[Definition 1.5]{NSV} in this case.

  Now assume $\chi$ is unipotent.  Since $\chi$ is unipotent and lies in a principal series, we have $\chi\in \mathcal{E}(G, (T, 1))$ and extends to a unipotent character $\wt{\chi}$ in the principal series $\mathcal{E}(\wt{G}, (\wt{T}, 1))$ for $\wt{G}$.  The character $\wt{\chi}$ is also rational-valued and  satisfies $D_{\wt{\chi}}=D_\chi$.  On the other hand, $\Omega(\chi)$ may be identified with an odd-degree character of $W=M/T\cong \wt{M}/\wt{T}$.  Then the same arguments as in \prettyref{lem:principalextend} yields a rational-valued character of $\wt{G}\langle F_0\rangle$ extending $\wt{\chi}$, and hence $\chi$, as well as a rational-valued character of $\wt{M}\langle F_0\rangle$ extending $\Omega(\chi)$, again giving part (iv) of \cite[Definition 1.5]{NSV}. 
  
  \medskip
  
We may therefore assume that $n\geq 2$ and  $q$ is an odd power of an odd prime.

\medskip

(2) Assume $n=2^k\geq 2$ is a power of $2$. Here $M$ is of the form $\Sp_{n}(q)\wr C_2$ and the non-unipotent members of $\irr_{2'}(X)$ for $X\in\{G, \Sp_n(q)\}$ lie in a single principal series $\mathcal{E}(X, (T_X, \delta_X))$ with $T_X$ a maximally split torus of $X$ and $\delta_X\in\irr(T_X)$ with $\delta_X^2=1$. The map $\Omega$ sends unipotent characters to the extensions in $M$ of characters $\mu\otimes\mu\in\irr(\Sp_n(q)\times \Sp_n(q))$ for unipotent $\mu$ and sends non-unipotent characters to corresponding extensions for non-unipotent $\mu$.  (See \cite[Lemma 7.2]{SF20} for details.)

The same arguments as above yield extensions of $\chi$ to $\wt{G}_\chi\langle F_0\rangle$, and of $\mu$ to $\operatorname{CSp}_n(q)_\mu\langle F_0\rangle$ that are invariant under any $\tau\in\galh_{\chi}$.  Here since $F_0$ induces the action of $F_p$ on each $\Sp_n(q)$ component, we by an abuse of notation write $F_0$ for the corresponding field automorphism of $\Sp_n(q)$. Further, note that the action of $C_2$ on $\Sp_n(q)\times \Sp_n(q)$ (viewed as being embedded into $G$ block-diagonally) is induced by the  matrix $\begin{pmatrix} 0 & I_n \\ I_n & 0\end{pmatrix}$, so we see that $F_0$ commutes with this element. With this, we have an extension of $\Omega(\chi)$ to $\wt{M}_\chi\langle F_0\rangle$ invariant under such a $\tau$, applying \cite[Lemma 5.5]{SF20}.  If $q\equiv 7\pmod 8$, $\galh_\chi=\galh$ by \cite[Lemma 4.10 and Theorem B]{SFT20} and this completes part   (iv) of \cite[Definition 1.5]{NSV}.

So, suppose $q\equiv \pm 3\pmod 8$ and $\tau\in\galh$ does not fix $\chi$.  Then $\chi$ is non-unipotent and we have  $\chi\in\mathcal{E}(G, (T, \delta))$ with $\delta^2=1$, as above. 
Suppose that $a\in \wt{M}D$ satisfies $\chi^{a\tau}=\chi$.  Then since $\chi$ is $MD$-invariant, we may without loss assume that $a\in \wt{T}$ induces the nontrivial diagonal automorphism of order 2 in $\out(G)$ and that $[a, F_0]=1$. 
Here $\wt{T}$ is a maximally split torus of $\wt{G}$ containing $T$.   Note then that $a$ preserves $\bT$ and $\delta^{a\tau}=\delta$.   

As in \prettyref{lem:principalextend}, extending $\delta$ to $\wh{\delta}$ in $\irr(T\langle F_0\rangle)$ via $\wh{\delta}(F_0)=1$ yields $\wh{\delta}^{a\tau}=\wh{\delta}$. Let $\hat\chi\in\irr(G\langle F_0\rangle|\chi)$ be such that $\langle \HC_{T\langle F_0\rangle}^{G\langle F_0\rangle}\hat\delta, \hat\chi\rangle\neq 0$ as in \prettyref{prop:uniquedescent}.  Note that $\langle \HC_{T\langle F_0\rangle}^{G\langle F_0\rangle}\hat\delta, \hat\chi^{a\tau}\rangle\neq 0$ and that $\hat\chi^{a\tau}$ lies above $\chi$.  This forces $\hat\chi^{a\tau}=\hat\chi$ by \prettyref{prop:uniquedescent}.  This then implies that there is an $a\tau$-invariant extension of $\chi$ to $(\wt{G}D)_\chi$.

Arguing analogously for the characters of $\Sp_n(q)$ and noting that $a$ induces the corresponding diagonal automorphism on the two $\Sp_n(q)$ components, we obtain an $a\tau$-invariant extension of $\mu\otimes \mu$ to $(\Sp_n(q)\times \Sp_n(q))\langle F_0\rangle$, and hence an $a\tau$-invariant extension of $\Omega(\chi)$ to $M\langle F_0\rangle$, and hence $(\wt{M}D)_\chi$.  (Indeed, since $a$ may be chosen to commute with the $C_2$-action, arguing exactly as in \cite[Lemma 5.5]{SF20} for $a\tau$ yields such an extension.) This completes part (iv) of \cite[Definition 1.5]{NSV} in this case.

 \medskip

(3) Now assume the $2$-adic decomposition of $n$ is $n=\sum_{j\in J} 2^j$ with $|J|\geq 2$, and write $k:=\mathrm{max}\{j\in J\}$ and $m:=2^k$. Then in this case, $M=\Sp_{2(n-m)}(q)\times \Sp_{2m}(q)$. Write $M_1:=\Sp_{2(n-m)}(q)$ and $M_2:= \Sp_{2m}(q)$.  The map $\Omega$ sends unipotent characters in $\irr_{2'}(G)$ to products of unipotent characters in $\irr_{2'}(M)$. Again, every odd-degree unipotent character of $\irr_{2'}(X)$ for $X\in\{G, M_1, M_2\}$ lies in the principal series, and hence arguing as before yields part (iv) of \cite[Definition 1.5]{NSV} if $\chi$ is unipotent.

We therefore assume $\chi$ is not unipotent, so $\Omega(\chi)=\chi_1\otimes\chi_2\in\irr_{2'}(M_1)\otimes \irr_{2'}( M_2)$ with at least one of $\chi_1$ or $\chi_2$ non-unipotent. Note that the diagonal and field automorphisms of $G$ induce the corresponding automorphism on $M_1$ and $M_2$.  Also, recall that non-unipotent members of $\irr_{2'}(M_2)$ lie in a single principal series as in part (2). Further, if both $\chi$ and $\chi_1$ also lie in a principal series, then arguing as in part (2) gives the desired extensions for $\chi, \chi_1,$ and $\chi_2$, and gives part (iv) of \cite[Definition 1.5]{NSV}.

Now assume $\chi$ does not lie in a principal series.  Then $q\equiv 3\pmod 4$, $n$ is odd, and $\chi$ lies in a Harish-Chandra series $\mathcal{E}(G, (L, \delta))$ where $L\cong \Sp_2(q)\times T_0$ with $T_0=\bT_0^F$ a maximally split torus of $\Sp_{2n-2}(q)$ and $\delta=\psi\otimes \delta_0$ with $\delta_0\in\irr(T_0)$ satisfying $\delta_0^2=1$ and $\psi$ one of the two characters of $\Sp_2(q)$ of degree $\frac{q-1}{2}$.  (See \cite[Theorem 7.7]{MS16}.)  Analogous odd-degree characters exist for $M_1$, and the map $\Omega$ is constructed so that $\chi_1$ is also of this form. Say $\chi_1\in\mathcal{E}(M_1, (L_1, \delta_1))$ with $\delta_1:=\psi\otimes \delta_{0,1}\in \irr(\Sp_2(q))\otimes \irr(T_{0,1})$.  (Here $\psi$ is the same as for $\chi$.)

Now, \cite[Theorem B]{SFT20}, together with the explicit knowledge of the values of $\psi$, yields that $\chi$ is fixed by $\tau\in\galh$ if and only if $\psi$ is, and that both are fixed by $\tau^2$.  Further, recall that \cite[Theorem 4.9]{malle08} yields that $\chi$ and $\psi$ have the same stabilizer in $\out(G)$. Note that $F_0$ induces the corresponding field automorphism on the $\Sp_2(q)$ and $T_0$ components of $L$  and that $\norm_G(\bL)=\Sp_2(q)\times \norm_{\Sp_{2n-2}(q)}(\bT_0)$.  Then since $\psi$ is $F_0$-invariant, we see $\delta$ satisfies the assumptions of Proposition \ref{prop:uniquedescent} since $\delta_0$ does in $\Sp_{2n-2}(q)$, being of the same form as the case of principal series characters in $\Sp_{2n-2}(q)$.  

 Assume that $\chi^{a\tau}=\chi$ for $a\in \wt{M}D$ and $\tau\in\galh$.  If $a$ is nontrivial, we may again assume that $a\in\wt{T}$ induces the nontrivial diagonal automorphism and that it further induces 
 a non-inner diagonal automorphism on $\Sp_2(q)$ and $T_0$. As in part (2), $\delta_0$ is invariant under $\tau, F_0, $ and $a$.  


As in part (2), $\delta_0$  may be extended to a character $\wh{\delta_0}$ of $T_0\langle F_0\rangle$ satisfying $\wh{\delta_0}^{a\tau}=\wh{\delta_0}$. Let $\wh{\psi}$ be some extension of $\psi$ to a character of $\Sp_2(q)\langle F_0\rangle$ with $\wh{\psi}^{a\tau}=\wh{\psi}\la$ for $\la\in\irr(\langle F_0\rangle)$.  Then the extension $\wh{\delta}:=(\wh{\psi}\otimes \wh{\delta}_0)_{L\langle F_0\rangle}$ of $\delta$ to $L\langle F_0\rangle$ satisfies $\wh{\delta}^{a\tau}=\wh{\delta}\la$.  Let $\hat\chi\in\irr(G\langle F_0\rangle|\chi)$ be such that $\langle \HC_{L\langle F_0\rangle}^{G\langle F_0\rangle}\hat\delta, \hat\chi\rangle\neq 0$ as in \prettyref{prop:uniquedescent}.  Note that $0\neq \langle \HC_{L\langle F_0\rangle}^{G\langle F_0\rangle}\hat\delta^{a\tau}, \hat\chi^{a\tau}\rangle=\langle \HC_{L\langle F_0\rangle}^{G\langle F_0\rangle}\hat\delta \la, \hat\chi^{a\tau}\rangle=\langle \HC_{L\langle F_0\rangle}^{G\langle F_0\rangle}\hat\delta, \hat\chi^{a\tau}\la^{-1}\rangle$ and that $\hat\chi^{a\tau}\la^{-1}$ lies above $\chi$.  This forces $\hat\chi^{a\tau}\la^{-1}=\hat\chi$ by \prettyref{prop:uniquedescent}, and hence $\wh{\chi}^{a\tau}=\wh{\chi}\la$.

Now, since $\Omega$ is $\wt{M}D$-equivariant, we have $\chi_1^{a\tau}=\chi_1$ and $\chi_2^{a\tau}=\chi_2$.  Recalling that $\delta_1$ is defined using the same $\psi$ as for $\delta$, the exact same arguments can be made for $\delta_1$ and $\chi_1$, so there is an extension $\wh{\chi}_1$ of $\chi_1$ to $M_1\langle F_0\rangle$ such that $\wh{\chi}_1^{a\tau}=\wh{\chi}_1\lambda$.  The argument in part (2) yields an extension $\wh{\chi}_2$ of $\chi_2$ to $M_2\langle F_0\rangle$ such that $\wh{\chi}_2^{a\tau}=\wh{\chi}_2$.  Together, the extension $\wh{\Omega(\chi)}:=(\wh{\chi}_1\otimes \wh{\chi}_2)_{M\langle F_0\rangle}$ of $\Omega(\chi)$ to $M\langle F_0\rangle$ satisfies $(\wh{\Omega(\chi)})^{a\tau}=\wh{\Omega(\chi)}\la$, giving part (iv) of \cite[Definition 1.5]{NSV} in this final case and completing the proof.
\end{proof}

%

\section{The Proof of Theorem \ref{thm:iMN}: Remaining Cases}\label{sec:remaining}

We keep the notation of Section \ref{sec:notn}.  We now assume that $S=G/Z(G)$ is one of the simple groups listed in Theorem \ref{thm:iMN}. Then $\wt{G}D$, where $D$ is a well-chosen group of field and graph automorphisms, induces all automorphisms of $S$.

It will also be useful to keep the notation of \cite[Section 2.2]{SF20}, so that  $v\in\bG$ is the canonical representative in the extended Weyl group of $G$ of the longest element in the Weyl group $\mathbf{W}:=\norm_\bG(\bT)/\bT$, which induces an isomorphism $G\cong \bG^{vF}$; $\bT=\norm_\bG(\bS)$ for a Sylow $d$-torus $\bS$ of $(\bG, vF)$; $T_1:=\bT^{vF}$; and $N_1:=\norm_{\bG^{vF}}(\bS)$.  Further let $\wt{N}_1:=\norm_{\wt{\bG}^{vF}}(\bS)$. (We remark that when $d=1$, we have $T_1=T$, $N_1=N$, and $\wt{N}_1=\norm_{\wt{G}}(\bT)$.)

%
%

\begin{proof}[Proof of Theorem \ref{thm:iMN}]
From \prettyref{sec:typeC}, we may assume that $S\neq \PSp_{2n}(q)$ is one of the remaining groups in \prettyref{thm:iMN}. Further, recall that we assume that $G$ is the universal covering group of $S$. (That is, $S\neq \type{B}_3(3)$.) Let $P\in\syl_2(G)$,   $\galh:=\galh_2$, and let $d\in\{1,2\}$ be the order of $q$ modulo 4.

By  \cite[Theorem 6.2 and Proposition 8.1]{SF20}, there is an $\aut(G)_P\times\galh$-equivariant bijection \[\Omega\colon \irr_{2'}(G)\rightarrow \irr_{2'}(M),\] where $M:=\norm_G(\bS_0)$ for a Sylow $d$-torus $\bS_0$ of $(\bG, F)$.     Let $\chi\in\irr_{2'}(G)$.  By \cite[Proposition 8.1]{SF20}, $\chi$ is rational-valued, so that $\chi^\galh=\chi$.  By  \cite[Lemma 7.5 and Theorem 7.7]{MS16} and \cite[Lemma 2.6]{SF20}, $\chi$ lies in a principal series $\mathcal{E}(G, (T,\delta))$ with $\delta^2=1$. Note also that $\Omega$ is as in Lemma \ref{lem:principalextend}(2) if $q\equiv 1\pmod 4$.

Write $\psi:=\Omega(\chi)$ and $\wt{M}:=\norm_{\wt{G}}(\bS_0)$.  Note that, as in the proof for symplectic groups, it again suffices to show
\[((\wt{G} D)_{ \chi}, G, \chi)_{\galh} \geqslant_c ((\wt{M}D)_{ \chi}, M, \psi)_{\galh}\] for all $\chi\in\irr_{2'}(G)$, and that we again have part (i) and (ii) of \cite[Definition 1.5]{NSV} are satisfied. Note that $D=\langle F_p\rangle$ for a generating field automorphism $F_p$. When $q\equiv 3\pmod 4$, by applying \cite[Lemma 2.1]{NSV}, it will be useful to identify $M$ with $N_1$, $G$ with $\bG^{vF}$, and $\wt{M}$ with $\wt{N}_1$, in which case there is a field automorphism $\hat{F}_p$ that acts on $\bG^{vF}$ as $F_p$ and so that $T_1$ and $N_1$ are stabilized by $\hat{F}_p$ (see  Notation 3.3 and the proof of Proposition 3.4 of \cite{MS16}.)

Now, note that $|\wt{G}/G\zen(\wt{G})|$ divides $2$ and that $D_\chi=\langle F_0\rangle=D_\psi$ for some field automorphism $F_0$.  By \cite[Theorem 2.5.12(g)]{gorensteinlyonssolomonIII}, we have $F_0$ commutes with the action of $\wt{G}$ up to conjugation by $G$. Write $\hat{G}=G\langle F_0\rangle$ and $\hat{M}=M\langle F_0\rangle$.  

 By \cite[Corollary 5.3]{MS16}, we have $(\wt{G} D)_{\chi_0}=\wt{G}_{\chi_0} D_{\chi_0}$ for some $\wt{G}$-conjugate $\chi_0$ of $\chi$. However, this gives $(\wt{G} D)_\chi=\wt{G}_\chi D_\chi$ since $D$ and $\wt{G}$ commute up to the action of $G$.  We further have $(\wt{M}D)_\psi=\wt{M}_\psi D_\psi$, arguing similarly and using \cite[Theorems 3.1 and 3.18]{MS16}.  We have part (iii) of \cite[Definition 1.5]{NSV} by \cite[Lemma 2.11]{spath12} and \cite[Lemma 2.13]{spath12} and its proof.  We are therefore left to prove that part (iv) of \cite[Definition 1.5]{NSV} holds.

Let $a\tau\in \wt{M}D\times \galh$ such that $\chi^{a\tau}=\chi$.  Then $a\in (\wt{M}D)_\chi$ since $\chi$ is rational-valued.
By Lemma \ref{compute mu},  
 it suffices to show that there are extensions $\wt{\chi}$ and $\hat\chi$, respectively $\wt{\psi}$ and $\hat\psi$, of $\chi$ to $\wt{G}_\chi$ and $\hat G$, resp. of $\psi$ to $\wt{M}_\psi$ and $\hat M$, such that  if  $\wt{\chi}^{a\tau}=\mu_1\wt{\chi}$, $\wt{\psi}^{a\tau}=\mu_1'\wt{\psi}$, $\wh{\chi}^{a\tau}=\mu_2\wh{\chi}$, $\wh{\psi}^{a\tau}=\mu_2'\wh{\psi}$, then $\mu_i=\mu_i'$ for $i=1,2$.    
(Throughout, we will identify the characters of $\wt{T}/T$, $\wt{G}/G$, and $\wt{M}/M$, and similar for $\hat{T}/T$, $\hat G/G$ and $\hat M/M$.)

\medskip

We first show that there are appropriate extensions $\hat\chi$ and $\hat\psi$ as above such that $\mu_2=\mu_2'$.  We may assume that $a\in\wt{M}_\chi$ since certainly extensions $\hat\chi$ and $\hat\psi$ to $\hat G$ and $\hat M$ will be $F_0$-invariant.  Hence we may assume $a\in\wt{T}_\chi$.  Note that $a$ preserves $\hat T$.  
If $q\equiv 1\pmod 4$, then Lemma \ref{lem:principalextend} yields extensions $\hat\chi$ and $\hat\psi$ such that $\hat\chi^\tau=\hat\chi$ and $\hat\psi^\tau=\hat\psi$.  Let $\mu_2\in\irr(\hat G/G)$ such that $\hat\chi^a=\mu_2\hat\chi$.  In the notation of the proof of Lemma \ref{lem:principalextend}, we have $\langle \HC_{\hat T}^{\hat G} \hat\delta^a, \hat\chi^a\rangle\neq 0$.  This forces $\hat\delta^a=\mu_2\hat\delta$ by Proposition \ref{prop:uniquedescent}, since $\HC_{\hat T}^{\hat G} (\mu_2\hat\delta)=\mu_2\HC_{\hat T}^{\hat G} \hat\delta$ by \cite[Problem (5.3)]{isaacs}.  
Now, recall that $\psi=\Ind_{ N_\delta}^{ N} (\Lambda(\delta)\gamma)$ and $\hat\psi=\Ind_{\hat N_\delta}^{\hat N} (\hat\Lambda(\hat\delta)\gamma)$, 
where $\Lambda$ is the extension map with respect to $T\lhd N$ studied in \cite[Section 4]{SF20}, $\gamma$ is a linear character of $N_\delta/T\cong \hat N_\delta/\hat T$, and $\hat\Lambda(\hat\delta)$ is the unique common extension of $\Lambda(\delta)$ and $\hat\delta$ to $\hat N_\delta$.  Let $\nu\in N_\delta/T$ be such that $\Lambda(\delta)^a=\Lambda(\delta)\nu$.  Since $\psi^a=\psi$, the uniqueness of Clifford correspondence and \cite[Proposition 3.15]{MS16} yields that $\Lambda(\delta)\nu\gamma^a=\Lambda(\delta)\nu\gamma=\Lambda(\delta)\gamma$.   That is, $\Lambda(\delta)^a=\Lambda(\delta)$ and $\gamma^a=\gamma$.  Then it follows that $\hat\Lambda(\hat\delta)^a=\mu_2\hat\Lambda(\hat\delta)$ and $\hat\psi^a=\Ind_{\hat N_\delta}^{\hat N} (\mu_2\hat\Lambda(\hat\delta)\gamma)=\mu_2\hat\psi$, completing the claim in this case.

If $q\equiv 3\pmod 4$, note that $q$ is not a square, so $F_0$ has odd order.  Here $\irr_{2'}(N_1)$ is in bijection with pairs $(\delta_1, \eta_1)$ where $\delta_1\in\irr(T_1)$ satisfies $[N_1:(N_1)_{\delta_1}]$ is odd,  $\delta_1^2=1$, and $\eta_1\in\irr_{2'}(W_1(\delta_1))$.  Here we define $W_1(\delta_1):=(N_1)_{\delta_1}/T_1$.
Now, the member of $\irr_{2'}(N_1)$ corresponding to $(\delta_1, \eta_1)$ is of the form $\mathrm{Ind}_{(N_1)_{\delta_1}}^{N_1}(\Lambda_1(\delta_1)\eta_1)$, where $\Lambda_1$ is an extension map with respect to $T_1\lhd N_1$.  (See \cite[Section 8]{SF20}.)  
Then in this case, since the order of $F_0$ is relatively prime to $o(\delta)$ and $o(\delta_1)$, we may 
appeal to \prettyref{lem:principalextend}(1) and \prettyref{lem:localextend} with $(X,Y, \alpha, \beta)=(T_1, N_1, \hat{F}_0, a)$, to obtain the extensions such that $\mu_2=\mu_2'$.

\medskip

Finally, it remains to show that we may obtain extensions $\wt{\chi}$ and $\wt{\psi}$ as described above such that $\mu_1=\mu_1'$ when $\wt{G}_\chi=\wt{G}$ in the cases of $\type{B}_n(q)$ or $\type{E}_7(q)$.   It suffices here to assume that $a\in D_\chi=\langle F_0\rangle$, since certainly an extension of $\chi$ or $\psi$ to $\wt{G}$ or $\wt{M}$ is fixed by $\wt{M}$. Further, note that $|\zen(G)|=2$, so $\chi\in\irr_{2'}(G)$ is necessarily trivial on the center.  Hence we may trivially extend $\chi$ to $\chi'\in\irr_{2'}(G\zen(\wt{G}))$, and we may similarly extend $\psi$ to $\psi'\in\irr_{2'}(M\zen(\wt{G}))$. Then since $|\wt{G}/G\zen(\wt{G})|=2$, we wish to show that $\wt{\chi}\in\irr(\wt{G}|\chi')$ is fixed by $a\tau$ if and only if $\wt{\psi}\in\irr(\wt{M}|\psi')$ is.  Now, from \cite[Theorem 6.3]{MS16}, we see there is a $D_\chi$-equivariant map $\irr(\wt{G}|\chi')\rightarrow \irr(\wt{M}|\psi')$, so it suffices to show that $\wt{\chi}$ is fixed by $\tau$ if and only if $\wt{\psi}$ is. 

Now, recall that $\chi$ lies in a principal series $\mathcal{E}(G, (T, \delta))$ with $\delta^2=1$.  Let $\wt{T}=\wt{\bT}^F$, where $\wt{\bT}:=\bT\zen(\wt{\bG})$ is a maximally split torus of $\wt{\bG}$ containing $\bT$.  Then an application of Mackey's theorem yields that $\wt{\chi}\in\mathcal{E}(\wt{G}, (\wt{T}, \wt{\delta}))$ for some $\wt{\delta}\in\irr(\wt{T}|\delta)$.    Since the corresponding relative Weyl group $\norm_{\wt{G}}(\wt{\bT})_{\wt{\delta}}/\wt{T}$ is a Weyl group, \cite[Theorem 3.8]{SFgaloisHC}  (see also \cite[Proposition 5.5]{geck03}) implies that $\wt{\chi}^\tau=\wt{\chi}$ if and only if $\wt{\delta}^\tau=\wt{\delta}$. 
 
 On the other hand, recall that $\psi=\mathrm{Ind}_{(N_1)_{\delta_1}}^{N_1}(\Lambda_1(\delta_1)\eta_1)$.  Note that 
 \begin{equation}\label{eq:Lambda_1}\Lambda_1(\delta_1)^\tau=\Lambda_1(\delta_1)=\Lambda_1(\delta_1^\tau)
 \end{equation} by \cite[Proposition 4.6 and the last paragraph of Proposition 8.1]{SF20}.
 Let $\wt{\delta}_1$ be an extension of $\delta_1$ to $\wt{\bT}^{vF}$ and let $\wt{\Lambda}(\wt{\delta}_1)\in\irr((\wt{N}_1)_{\wt{\delta}_1})$
  be the unique common extension of $\Lambda_1(\delta_1)|_{(N_1)_{\wt{\delta}_1}}$ and $\wt{\delta}_1$ as in the proof of \cite[Proposition 3.20]{MS16}. Similarly, let $\wt{\Lambda}(\wt{\delta}_1^\tau)$ be the unique common extension of $\Lambda_1(\delta_1)|_{(N_1)_{\wt{\delta}_1}}$ and $\wt{\delta}_1^\tau$.  Then using \eqref{eq:Lambda_1} and by uniqueness, this forces $\wt{\Lambda}(\wt{\delta}_1^\tau)=\wt{\Lambda}(\wt{\delta}_1)^\tau$.
  
  Now, \cite[Problems (5.1)-(5.3)]{isaacs} imply that $\wt{\psi}$ is of the form $\mathrm{Ind}_{(\wt{N}_1)_{\wt{\delta}_1}}^{\wt{N}_1}(\wt{\Lambda}(\wt{\delta}_1)\wt{\eta}_1)$ for some linear $\wt{\eta}_1\in\irr((\wt{N}_1)_{\wt{\delta}_1}/\wt{T}_1)$.    Note that $\wt{\eta}_1^\tau=\wt{\eta}_1$ since it is a linear character of a real reflection group. 
   Hence $\wt{\psi}^\tau=\wt{\psi}$ if and only if $\wt{\delta}_1^\tau=\wt{\delta}_1$.  In the case $q\equiv 1\pmod 4$ (and hence $N=N_1=M$), this completes the proof.  Now, note that $\wt{\delta}_1$ is in duality with $(\wt{\bT}^{vF}, \wt{s_1})$, where $\wt{\chi}$ lies in the Lusztig series of $\wt{s_1}$ when viewed as a character of $\wt{\bG}^{vF}$.  But we also have $\wt{\delta}$ is in duality with $(T, \wt{s})$, where $\wt{\chi}$ lies in the series $\mathcal{E}(\wt{G}, \wt{s})$.  Hence when $q\equiv 3\pmod 4$, it remains to note that $\wt{\delta}^\tau=\wt{\delta}$ if and only if $\wt{\delta}_1^\tau=\wt{\delta}_1$, since they are linear and must have the same order.
\end{proof}

\def\cprime{$'$} \def\cprime{$'$}

\end{document}